\providecommand\@dotsep{5}
\def\listtodoname{List of Todos}
\def\listoftodos{\@starttoc{tdo}\listtodoname}
\numberwithin{equation}{section}
\def\cal{\mathcal}
\newtheorem{theorem}{Theorem}[section]
\newtheorem{proposition}[theorem]{Proposition}
\newtheorem{lemma}[theorem]{Lemma}
\newtheorem{corollary}[theorem]{Corollary}
\newtheorem{remark}{Remark}
\DeclareMathOperator{\cat}{cat}
\title[Fractional Laplacian in expanded domains]
{Multiplicity results for the fractional laplacian in expanded domains}
\author[G. M. Figueiredo]{Giovany M. Figueiredo}
\author[M. Pimenta]{Marcos T. O. Pimenta}
\author[G. Siciliano]{Gaetano Siciliano}
\address[G. M. Figueiredo]{\newline\indent Faculdade de Matem\'atica
\newline\indent 
Universidade Federal do Par\'a
\newline\indent
66075-110, Bel\'em - PA, Brazil}
\email{\href{mailto:giovany@ufpa.br}{giovany@ufpa.br}}
\address[M. T. O. Pimenta]{\newline\indent Departamento de Matem\'atica e Computa\c{c}\~ao
\newline\indent 
Faculdade de Ci\^encias e Tecnologia
\newline\indent
UNESP - Universidade Estadual Paulista
\newline\indent
19060-900, Presidente Prudente - SP - Brazil}
\email{\href{mailto:pimenta@fct.unesp.br}{pimenta@fct.unesp.br}}
\address[G. Siciliano]{\newline\indent Departamento de Matem\'atica
\newline\indent 
Instituto de Matem\'atica e Estat\'istica
\newline\indent 
 Universidade de S\~ao Paulo 
\newline\indent 
Rua do Mat\~ao 1010,  05508-090 S\~ao Paulo, SP, Brazil }
\email{\href{mailto:sicilian@ime.usp.br}{sicilian@ime.usp.br}}
\thanks{Giovany M. Figueiredo was partially
supported by  CNPq, Brazil. Marcos T.O. Pimenta was supported by Fapesp and CNPq, Brazil. Gaetano Siciliano  was partially supported by
Fapesp and CNPq, Brazil. }
\subjclass[2000]{35A15, 35S05, 58E05, 74G35}
\keywords{Fractional Laplacian, multiplicity of solutions, Ljusternick-Schnirelmann category, Morse theory}
\begin{document}

\maketitle
\begin{abstract}
\noindent In this paper we establish the multiplicity of nontrivial
weak solutions for the problem $(-\Delta)^{\alpha} u
+u= h(u)$ in $\Omega_{\lambda}$,\ $u=0$ on
$\partial\Omega_{\lambda}$, where
$\Omega_{\lambda}=\lambda\Omega$, $\Omega$ is a smooth and bounded domain in
$\mathbb{R}^N, N>2\alpha$, $\lambda$ is a positive parameter, $\alpha \in (0,1)$, $(-\Delta)^{\alpha}$ is the fractional Laplacian
and the nonlinear  term $h(u)$ has a
subcritical growth. We use minimax methods, the Ljusternick-Schnirelmann and
Morse theories to get multiplicity result depending on the topology of $\Omega$.

\end{abstract}
\maketitle

\section{Introduction}

This paper is concerned with the following
problem
\begin{equation}\label{principal}
\left\{
\begin{array}{ll}
(-\Delta)^{\alpha} u +u= h(u)  & \mbox{ in }  \Omega_{\lambda},\\
u>0 \\
 u=0 & \mbox{ on }  \partial\Omega_{\lambda},
\end{array}
\right.
\end{equation}
where $\Omega_{\lambda}=\lambda\Omega$, $\Omega$ is a smooth and bounded domain in $\mathbb{R}^N, N>2\alpha$,
$\lambda$
is a positive parameter, $\alpha \in (0,1)$,  $(-\Delta)^{\alpha}$ is the fractional Laplacian operator,
whose definition will be briefly recalled in the next section, and $h$ satisfies suitable assumptions.

\medskip

We are motivated in studying an equation involving the fractional Laplacian due to  the great attention which 
has been given in these last years to problems involving  fractional operators, both in $\mathbb R^{N}$ and in bounded domains.
Indeed these problems appear in many areas such as physics, economy, finance, optimization, obstacle problems, fractional diffusion and probabilistic.
In particular, from a probability point of view, the fractional Laplacian is the infinitesimal generator of a L\'evy
process, see e.g. \cite{Bertoin}. We also recall that a fractional Schr\"odinger equation has been derived by Laskin
in the framework of the Fractional Quantum Mechanics. More information and applications are contained in some references such as
\cite{Apple,Cont,L1,MK,Vla}.

On the other hand, in a beautiful series of papers, Benci, Cerami and Passaseo (see \cite{BC1,BC3,BCP})
investigate the existence and multiplicity of positive solutions
for  equations of type $-\Delta u+\lambda u=u^{p-1}$ 
or $-\varepsilon\Delta u+u=f(u)$ in a bounded domain $\Omega$ with Dirichlet boundary conditions.
 In particular they develop a tool
which allows to estimate the number of solutions depending on the ``shape''
of the domain (or of suitable ``nearby'' domains),  whenever the parameters $\lambda,\varepsilon$ or $p$
tend to a suitable limit value.
  They use variational methods,
and introduce  suitable maps which permit to see  ``a photography''
of $\Omega$ in a certain sublevel set of the energy functional related to the equation. Then the Ljusternick-Schnirlemann
and Morse theory, based on the properties of the {\sl category} and some Morse relations,
 are used in order to obtain the existence of multiple solutions.
Later on, these general ideas are  successfully applied also in other contests, 
such as the ``zero mass'' case in \cite{Vis},  Klein-Gordon and Schr\"odinger-Poisson type equations in \cite{GhiGri, GhiMic, Sicilia},
$p-$laplacian equations in \cite{Alves1,Alvesgio1,CV, CV2, CV3, CVV},  quasilinear equations in \cite{AUG,CGU}, 
 fractional Schr\"odinger equation in $\mathbb R^{N}$ with a potential in \cite{GG},
 problems involving magnetic fields in expanding domains in \cite{AFF,CSR}, among many others.

\medskip

The aim of this paper is to show existence and
 multiplicity results of  solutions for the fractional scalar field equation \eqref{principal}
 in the expanding domain $\Omega_{\lambda}.$ 
We obtain the same type of results  of the papers cited above:
 roughly speaking, 
 for $\lambda$ large enough the number
of positive solutions is bounded below by topological invariants related to $\Omega_{\lambda}$.

More precisely,
let us assume that $h:\mathbb{R}\rightarrow\mathbb{R}$ is a $C^1-$function verifying the following conditions:\smallskip
\begin{enumerate}[label=(H\arabic*),ref=H\arabic*,start=0]
\item\label{H_{0}}$h(s)=0$ for $s\leq0$; \smallskip
\item\label{H_{1}} $h(s)=o(|s|)$ at the origin;  \smallskip
\item\label{H_{2}} $ \lim_{|s|\rightarrow \infty}h(s)/|s|^{q-1}=0$ for some $q\in (2,2^{*}_\alpha)$ where
$2^*_\alpha = 2N/(N-2\alpha)$;  \smallskip
\item\label{H_{3}} there exists $\theta > 2$ such that $0<\theta
H(s)\leq sh(s)$ for all $s>0$, where $H(s)=\int_{0}^{s}h(t)\,dt$;  \smallskip
\item\label{H_{4}} the function $s\mapsto h(s)/s$ is
increasing for $s>0$. \smallskip
\end{enumerate}
The typical function satisfying the above conditions
is $h(s)=s^{\mu}$ for $s\geq 0$, with
$1<\mu<q-1$, and
$h(s)=0$ for $s<0$.\\

Our main results are the following.

\begin{theorem}
\label{Main2} Suppose that  \eqref{H_{0}}-\eqref{H_{4}} hold. Then there exists
$\lambda^{*}>0$ such that for $\lambda\geq\lambda^{*}$, problem
(\ref{principal}) has at least $cat \,\Omega_{\lambda}$ 
weak solutions.
\end{theorem}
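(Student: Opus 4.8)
The plan is to run the by-now classical Benci–Cerami ``photography'' scheme, adapted to the nonlocal operator. First I would give \eqref{principal} a variational formulation: via the Caffarelli–Silvestre extension one trades $(-\Delta)^{\alpha}$ on $\Omega_{\lambda}$ for a degenerate-elliptic local problem on the half-cylinder $\mathcal{C}_{\lambda}=\Omega_{\lambda}\times(0,\infty)$, with energy
\[
I_{\lambda}(v)=\frac{\kappa_{\alpha}}{2}\int_{\mathcal{C}_{\lambda}} t^{1-2\alpha}|\nabla v|^{2}\,dx\,dt+\frac12\int_{\Omega_{\lambda}}|v(\cdot,0)|^{2}\,dx-\int_{\Omega_{\lambda}}H(v(\cdot,0))\,dx
\]
on the natural trace space with zero lateral boundary data. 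Conditions \eqref{H_{1}}--\eqref{H_{2}} give the mountain-pass geometry, \eqref{H_{3}} gives boundedness of Palais–Smale sequences, and \eqref{H_{4}} makes the Nehari manifold $\mathcal{N}_{\lambda}=\{v\neq 0:\ I_{\lambda}'(v)v=0\}$ a $C^{1}$ natural constraint on which $I_{\lambda}$ is bounded below, with $c_{\lambda}:=\inf_{\mathcal{N}_{\lambda}}I_{\lambda}$ equal to the mountain-pass level.

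Next I would bring in the ``problem at infinity'' $(-\Delta)^{\alpha}u+u=h(u)$ on $\mathbb{R}^{N}$, whose least-energy level $c_{\infty}$ is attained by a positive (radial, up to translation) ground state $w$. Monotonicity in the domain gives $c_{\lambda}\ge c_{\infty}$, while inserting a truncated, translated copy of the extension of $w$ deep inside $\mathcal{C}_{\lambda}$ shows $c_{\lambda}\to c_{\infty}$ as $\lambda\to\infty$. The compactness input is a splitting (concentration–compactness) lemma: a Palais–Smale sequence for $I_{\lambda}|_{\mathcal{N}_{\lambda}}$ at level $c<c_{\infty}+\delta_{0}$ either converges or splits off at least one full bubble of energy $c_{\infty}$, which would force $c\ge 2c_{\infty}$; hence, for $\lambda$ large, $I_{\lambda}|_{\mathcal{N}_{\lambda}}$ satisfies $(PS)_{c}$ for every $c$ in a fixed interval $[c_{\infty},c_{\infty}+\delta]$.

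The core step is to build, for $\lambda$ large and a suitable small $\varepsilon>0$, two maps
\[
\Phi_{\lambda}:\Omega\longrightarrow \mathcal{N}_{\lambda}\cap I_{\lambda}^{\,c_{\infty}+\varepsilon},\qquad \beta_{\lambda}:\mathcal{N}_{\lambda}\cap I_{\lambda}^{\,c_{\infty}+\varepsilon}\longrightarrow \Omega^{+},
\]
where $\Omega^{+}$ is a fixed closed neighbourhood of $\Omega$ with $\cat_{\Omega^{+}}\Omega=\cat\,\Omega$, $\Phi_{\lambda}(y)$ is a truncated extended ground state centred at $\lambda y$ (then projected onto $\mathcal{N}_{\lambda}$), and $\beta_{\lambda}(v)$ is $\lambda^{-1}$ times the barycentre of (a fixed increasing function of) $|v(\cdot,0)|$. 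Controlling $I_{\lambda}(\Phi_{\lambda}(y))<c_{\infty}+\varepsilon$ uniformly in $y\in\Omega$ is a direct estimate; the delicate point is that every $v\in\mathcal{N}_{\lambda}\cap I_{\lambda}^{\,c_{\infty}+\varepsilon}$ has its trace mass genuinely concentrated near a single point of $\Omega$, so that $\beta_{\lambda}$ is well defined with values in $\Omega^{+}$ — again a consequence of the splitting lemma, ruling out spreading or bubbling. Since $\beta_{\lambda}\circ\Phi_{\lambda}$ is then homotopic to the inclusion $\Omega\hookrightarrow\Omega^{+}$, a standard category inequality yields $\cat\big(\mathcal{N}_{\lambda}\cap I_{\lambda}^{\,c_{\infty}+\varepsilon}\big)\ge \cat\,\Omega=\cat\,\Omega_{\lambda}$.

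Finally, Ljusternik–Schnirelmann theory on the complete $C^{1}$ manifold $\mathcal{N}_{\lambda}$, together with $(PS)$ in $[c_{\infty},c_{\infty}+\delta]$, produces at least $\cat\big(\mathcal{N}_{\lambda}\cap I_{\lambda}^{\,c_{\infty}+\varepsilon}\big)$ critical points of $I_{\lambda}|_{\mathcal{N}_{\lambda}}$ in that sublevel; each is a nontrivial weak solution of the extended problem, \eqref{H_{0}} and the strong maximum principle for the extension give positivity, and the trace solves \eqref{principal}. The step I expect to be the main obstacle is precisely the concentration–compactness/splitting analysis in the extended, nonlocal framework: truncations and restrictions to subdomains are far less innocuous than for $-\Delta$, one must keep careful control of the weighted $t$-variable, and bubbles sitting close to $\partial\Omega_{\lambda}$ need a separate argument; once this ``single-bubble'' description of low-energy functions on $\mathcal{N}_{\lambda}$ is secured, the homotopy and the category bookkeeping are routine.
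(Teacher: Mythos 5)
Your proposal follows essentially the same route as the paper: the Caffarelli--Silvestre extension to the half-cylinder, the Nehari manifold and the limit problem on $\mathbb{R}^N$, the photography maps $\Phi_\lambda$ (the paper's $\Psi_{\lambda,r}$, built from the ground state of the ball problem on $B_{\lambda r}$ rather than a truncated whole-space ground state, so that its energy is automatically below $c(B_{\lambda r})$) and the barycenter $\beta$, the category inequality $\cat\big(\mathcal M_\lambda^{c(B_{\lambda r})}\big)\geq \cat\,\Omega_\lambda$, and Ljusternik--Schnirelmann theory in that sublevel. The one simplification you miss is that on the bounded domain $\Omega_\lambda$ the subcritical trace embedding is compact, so $(PS)_c$ holds at \emph{every} level with no splitting analysis; the concentration--compactness lemma is needed only for the limit problem and for the annulus comparison $\liminf_{\lambda\to\infty}a(R,r,\lambda)>c(\mathbb{R}^N)$, which is the paper's concrete mechanism for the ``single-bubble'' barycenter localization you correctly single out as the delicate step.
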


For $Y\subset X$, we are denoting with  $\cat_{X}(Y)$  the Ljusternick-Schnirelmann category
of $X$ in $Y$, i.e.
the least number of closed and contractible sets in $X$
which cover $Y$. When $X=Y$ we just write $\cat(X)$.

As usual, we get one more solution if the domain $\Omega_{\lambda}$ is not contractible, i.e.

\begin{theorem}\label{+1}
Beside the assumptions of the previous theorem, assume that $cat\,\Omega_{\lambda}>1$.
Then there exists
$\lambda^{*}>0$ such that for $\lambda\geq\lambda^{*}$, problem
(\ref{principal}) has at least $cat \,\Omega_{\lambda}+1$ 
weak solutions.
\end{theorem}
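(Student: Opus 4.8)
The plan is to follow the by-now classical strategy that Benci--Cerami introduced and that was later adapted to many settings (including the $p$-Laplacian and magnetic cases cited in the introduction): once Theorem~\ref{Main2} is available, the ``extra'' solution is produced by a Morse-theoretic argument on the Nehari manifold. I would work on the natural constraint $\mathcal N_\lambda$ (the Nehari manifold associated with the energy functional $I_\lambda$ of problem \eqref{principal}), which under hypotheses \eqref{H_{0}}--\eqref{H_{4}} is a $C^1$ submanifold on which $I_\lambda$ is bounded below, satisfies the Palais--Smale condition below the first energy level at which compactness could fail, and whose critical points are exactly the nontrivial weak solutions of \eqref{principal}. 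Let $c_\lambda=\inf_{\mathcal N_\lambda} I_\lambda$ and fix a small $\delta>0$; the key geometric input, already established in the proof of Theorem~\ref{Main2}, is that for $\lambda$ large there are continuous maps
\[
\Omega_\lambda \xrightarrow{\ \Phi_\lambda\ } I_\lambda^{\,c_\lambda+\delta}\cap\mathcal N_\lambda \xrightarrow{\ \beta_\lambda\ } \Omega_\lambda^{+}
\]
(a ``photograph'' map and a barycenter-type map) whose composition is homotopic to the inclusion $\Omega_\lambda\hookrightarrow\Omega_\lambda^{+}$, where $\Omega_\lambda^{+}$ is a suitable small neighborhood of $\Omega_\lambda$ homotopically equivalent to it.

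The core of the argument is then purely topological combined with the deformation lemma. First I would fix energy levels $c_\lambda < a < b < c_\lambda+\delta < d$ (with $d$ below the first noncompactness level) chosen so that $I_\lambda$ has no critical values in $[a,b]$ other than possibly at isolated points, and invoke the Morse relations: letting $\mathcal C$ denote the set of critical points of $I_\lambda|_{\mathcal N_\lambda}$ in $I_\lambda^{\,d}\setminus I_\lambda^{\,a}$ (which we may assume finite and nondegenerate, or else argue with Morse indices/Poincar\'e polynomials directly), one has
\[
\sum_{u\in\mathcal C} t^{\mu(u)} \;=\; \mathcal P_t\!\left(I_\lambda^{\,d},\,I_\lambda^{\,a}\right) \;+\;(1+t)\,\mathcal Q(t)
\]
for some polynomial $\mathcal Q$ with nonnegative coefficients. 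The strategy is to show that the topological pair $(I_\lambda^{\,d}, I_\lambda^{\,a})$ is rich enough to force \emph{more} critical points than the $\cat\Omega_\lambda$ lower bound from Theorem~\ref{Main2}. Concretely, using the maps $\Phi_\lambda,\beta_\lambda$ one shows that the low sublevel $I_\lambda^{\,c_\lambda+\delta}\cap\mathcal N_\lambda$ carries the homology of $\Omega_\lambda$ (the composition $\beta_\lambda\circ\Phi_\lambda\simeq\mathrm{id}$ makes $H_*(\Omega_\lambda)$ a direct summand), while the top sublevel $I_\lambda^{\,d}$ is contractible (it deformation-retracts onto a point since $I_\lambda$ is bounded below on $\mathcal N_\lambda$ and, for $\lambda$ large, $d$ can be taken above the mountain-pass level so that no topology survives). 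From the long exact sequence of the pair, the Poincar\'e polynomial $\mathcal P_t(I_\lambda^{\,d},I_\lambda^{\,a})$ then contains, besides the terms accounting for the $\cat\Omega_\lambda$ solutions near the bottom, at least one additional $t$-power coming from the connecting homomorphism when $\widetilde H_*(\Omega_\lambda)\neq 0$ --- which is exactly the hypothesis $\cat\Omega_\lambda>1$ (equivalently $\Omega_\lambda$ is not contractible). Plugging this into the Morse relations produces at least one critical point not counted before, i.e.\ at least $\cat\Omega_\lambda+1$ solutions.

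I would handle the nondegeneracy issue in the usual way: if the critical set is not discrete or the critical points are degenerate, perturb or instead argue with the cup-length / category of the pair, or use the fact that the Poincar\'e polynomial inequality $\sum_{u}\mathcal P_t(u)\ge \mathcal P_t(I_\lambda^{\,d},I_\lambda^{\,a})$ holds in the sense of coefficientwise domination once $(1+t)\mathcal Q(t)$ is subtracted; either route gives the count. The main obstacle I anticipate is \emph{not} the topological bookkeeping (which is routine once the framework is in place) but rather two analytic points specific to the fractional setting: (i) verifying that the Palais--Smale condition holds on $\mathcal N_\lambda$ up to the relevant energy level, which requires the concentration-compactness / Lions-type lemma for the fractional Sobolev space $H^\alpha_0(\Omega_\lambda)$ and a careful comparison with the limiting problem $(-\Delta)^\alpha u+u=h(u)$ in $\mathbb R^N$ --- but these are precisely the estimates that must already be in hand to prove Theorem~\ref{Main2}; and (ii) ensuring the maps $\Phi_\lambda$ and $\beta_\lambda$ are well defined on the Nehari manifold and that the barycenter $\beta_\lambda$ lands in $\Omega_\lambda^{+}$, which relies on the ground-state energy of the limiting problem being a strict threshold below which any concentrated function must concentrate near a single point of $\Omega_\lambda$. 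Granting those (they are inherited from the proof of Theorem~\ref{Main2}), the passage to $\cat\Omega_\lambda+1$ is then the standard Morse-theoretic addendum.
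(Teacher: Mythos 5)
Your proposal does not follow the paper's argument, and as written it has a genuine gap. The paper proves Theorem \ref{+1} with the Benci--Cerami--Passaseo \emph{cone} construction, a purely Ljusternik--Schnirelmann/deformation argument: since $\beta\circ\Psi_{\lambda,r}$ coincides with the inclusion $\Omega_\lambda^-\hookrightarrow\Omega_\lambda^+$ (Lemma \ref{homotopia}) and $\Omega_\lambda$ is not contractible, the compact set $A:=\overline{\Psi_{\lambda,r}(\Omega_\lambda^-)}$ cannot be contractible in the sublevel $\mathcal M_\lambda^{c(B_{\lambda r})}$. One then picks $u^*\geq 0$ with $I_\lambda(t_\lambda(u^*)u^*)>c(B_{\lambda r})$, forms the cone $\mathcal K=\{tu^*+(1-t)u:\ t\in[0,1],\ u\in A\}$, which avoids $0$ because Nehari functions are positive on a set of positive measure, projects it onto $\mathcal M_\lambda$, and observes that $A$ becomes contractible inside $\mathcal M_\lambda^{c}$ with $c:=\max_{t_\lambda(\mathcal K)}I_\lambda>c(B_{\lambda r})$. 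Since $I_\lambda$ satisfies the Palais--Smale condition, the deformation lemma forces a critical value in $(c(B_{\lambda r}),c]$, i.e.\ one more solution. No homology and no second variation enter at any point.

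Your Morse-theoretic route fails here for two concrete reasons. First, the hypothesis is $\cat\Omega_{\lambda}>1$, i.e.\ non-contractibility, which is strictly weaker than $\widetilde H_*(\Omega_\lambda;\mathbb F)\neq 0$: there exist non-contractible compact sets (e.g.\ regular neighborhoods in $\mathbb R^N$ of acyclic complexes with nontrivial fundamental group) whose reduced homology vanishes over every field, so the ``extra $t$-power from the connecting homomorphism'' you invoke need not exist, and the Morse relations then give nothing beyond Theorem \ref{Main2}. Second, under \eqref{H_{1}}--\eqref{H_{2}} alone the functional $I_\lambda$ is only $C^1$; the paper introduces the stronger hypotheses \eqref{H_{1}'}--\eqref{H_{2}'} precisely to have a well-defined second variation, a Fredholm structure and Morse indices, and that machinery is reserved for Theorem \ref{Morse} (the $2\mathcal P_1(\Omega_\lambda)-1$ count, \emph{with multiplicity}). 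Even granting $C^2$ regularity, the Morse relations count critical points with multiplicity, so without a nondegeneracy assumption --- which cannot simply be perturbed away while keeping the same equation --- they do not yield $\cat\Omega_\lambda+1$ \emph{distinct} solutions as the theorem asserts. The categorical cone argument sidesteps all of these issues, which is why the paper uses it; also note that your claim that the high sublevel $I_\lambda^{\,d}$ is contractible ``since $d$ is above the mountain-pass level'' is unjustified, whereas the paper only needs contractibility of the explicit set $t_\lambda(\mathcal K)$, which is contractible by construction.
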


\medskip

If we replace \eqref{H_{1}} and \eqref{H_{2}} with  slightly stronger conditions in order
to deal with the second variation of the energy functional
associated to problem \eqref{principal},
we can get
a better result by using the Morse Theory. To this aim, let \smallskip
\begin{enumerate}[label=(H\arabic*'),ref=H\arabic*',start=1]
\item\label{H_{1}'} $h'(s)=o(|s|)$ at the origin; \smallskip
\item\label{H_{2}'} $\displaystyle \lim_{|s|\rightarrow
\infty}{h'(s)}/{|s|^{q-2}}=0$ for some $q\in (2,2^{*}_\alpha)$. \smallskip
\end{enumerate}
Then we have

\begin{theorem}\label{Morse}
Suppose that  \eqref{H_{0}}-\eqref{H_{1}'}-\eqref{H_{2}'}-\eqref{H_{3}}-\eqref{H_{4}} hold. Then there exists
$\lambda^{*}>0$ such that for $\lambda>\lambda^{*}$, the equation
(\ref{principal}) has at least $2\mathcal P_{1}(\Omega_{\lambda})-1$
solutions, if counted with their multiplicity.
\end{theorem}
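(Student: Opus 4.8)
\emph{Proof proposal.} The strategy is the Benci--Cerami ``photography'' method combined with the Morse inequalities, carried out on the Nehari manifold associated to \eqref{principal}. First I would fix the variational setting: let $I_\lambda$ be the energy functional of \eqref{principal} on the fractional Sobolev space $H_0^\alpha(\Omega_\lambda)$ (equivalently, the extended functional on the half-cylinder $\Omega_\lambda\times(0,\infty)$ obtained via the Caffarelli--Silvestre extension), which under the strengthened hypotheses \eqref{H_{1}'}--\eqref{H_{2}'} is of class $C^2$. Since $\Omega_\lambda$ is bounded and $h$ is subcritical with the Ambrosetti--Rabinowitz condition \eqref{H_{3}}, $I_\lambda$ satisfies the Palais--Smale condition at every level. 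By \eqref{H_{4}} one introduces the Nehari manifold $\mathcal N_\lambda$; via the radial projection from the unit sphere it is homeomorphic to an infinite-dimensional sphere (hence contractible) and carries an induced functional, still denoted $I_\lambda$, which is bounded below and to which the critical-group formalism applies; its critical points are exactly the nontrivial critical points of $I_\lambda$, which by \eqref{H_{0}} and the strong maximum principle for $(-\Delta)^\alpha$ are positive weak solutions of \eqref{principal}. Write $c_\lambda=\inf_{\mathcal N_\lambda}I_\lambda$ and let $c_\infty$ be the ground-state level of the limit problem $(-\Delta)^\alpha u+u=h(u)$ in $\mathbb R^N$.

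The analytic heart of the argument is a Struwe-type global compactness analysis for $I_\lambda$ (carried out on the extension, where the problem is local). From it I would extract three facts, for $\lambda$ large: (a) $c_\lambda\to c_\infty$; (b) using translated, truncated copies of a positive, radial, decaying ground state of the limit problem centered at points $y$ of an inner neighborhood $\Omega_\lambda^-$ of $\Omega_\lambda$, a continuous map $\Phi_\lambda\colon \Omega_\lambda^-\to\mathcal N_\lambda$ with $\sup_y I_\lambda(\Phi_\lambda(y))\to c_\infty$; (c) a barycenter map $\beta$, well defined on every sublevel $\mathcal N_\lambda^{\,b}=\{u\in\mathcal N_\lambda: I_\lambda(u)\le b\}$ with $b$ strictly below the threshold $2c_\infty$, taking values in an outer neighborhood $\Omega_\lambda^+$ of $\Omega_\lambda$ --- this last point being exactly where the compactness analysis is used, to force functions of low energy to concentrate about a single point. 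Here $\Omega_\lambda^\pm$ are neighborhoods homotopically equivalent to $\Omega_\lambda$. Choosing $b\in(c_\lambda,2c_\infty)$ a regular value of $I_\lambda|_{\mathcal N_\lambda}$ (possible for $\lambda$ large since $c_\lambda\to c_\infty$), we obtain $\Phi_\lambda(\Omega_\lambda^-)\subset\mathcal N_\lambda^{\,b}$ and that $\beta\circ\Phi_\lambda$ is homotopic to the inclusion $\Omega_\lambda^-\hookrightarrow\Omega_\lambda^+$. Hence $(\Phi_\lambda)_*$ is injective on homology, $(\beta)_*$ is surjective, and $H_*(\Omega_\lambda)$ is a direct summand of $H_*(\mathcal N_\lambda^{\,b})$; in particular $\mathcal P_t(\mathcal N_\lambda^{\,b})=\mathcal P_t(\Omega_\lambda)+\mathcal Z(t)$ with $\mathcal Z$ having nonnegative integer coefficients.

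It remains to invoke Morse theory on $\mathcal N_\lambda$. If $I_\lambda|_{\mathcal N_\lambda}$ has infinitely many critical points there is nothing to prove; otherwise they are isolated. Since $\mathcal N_\lambda$ is contractible and $\mathcal N_\lambda^{\,b}$ is connected (it contains the ground state for $\lambda$ large), the long exact homology sequence of the pair $(\mathcal N_\lambda,\mathcal N_\lambda^{\,b})$ yields the exact identity $\mathcal P_t(\mathcal N_\lambda,\mathcal N_\lambda^{\,b})=t\bigl(\mathcal P_t(\mathcal N_\lambda^{\,b})-1\bigr)$. Adding the Morse relations for the sublevel $\mathcal N_\lambda^{\,b}$ and for the pair $(\mathcal N_\lambda,\mathcal N_\lambda^{\,b})$, and using Palais--Smale to deform $\mathcal N_\lambda$ onto a sublevel containing all critical points, one gets $\sum_{u}\mathcal P_t(I_\lambda,u)=(1+t)\,\mathcal P_t(\mathcal N_\lambda^{\,b})-t+(1+t)\,\mathcal Q(t)$ with $\mathcal Q$ nonnegative, the sum running over all critical points. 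Substituting the direct-summand decomposition and evaluating at $t=1$ gives $\sum_u \mathcal P_1(I_\lambda,u)\ge 2\mathcal P_1(\Omega_\lambda)-1$; since each $\mathcal P_1(I_\lambda,u)=\sum_q\dim C_q(I_\lambda,u)$ counts the solution $u$ with its multiplicity and all these solutions are positive, the claim follows (note $\mathcal P_1(\Omega_\lambda)=\mathcal P_1(\Omega)$, as $\Omega_\lambda$ is diffeomorphic to $\Omega$).

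The main obstacle is step (c): the nonlocality of $(-\Delta)^\alpha$ prevents a direct localization, so one must either transfer the whole analysis to the local Caffarelli--Silvestre extension on a cylinder and prove a splitting lemma there, or establish a fractional Brezis--Lieb decomposition for Palais--Smale sequences by hand, together with the fact that any non-concentrating low-energy function on $\mathcal N_\lambda$ would carry energy at least $2c_\infty$. A secondary, more routine point is verifying that the $C^2$ Morse machinery (critical groups, Morse relations, Palais--Smale deformations) is fully available on the fractional Nehari manifold, for which the strengthened assumptions \eqref{H_{1}'}--\eqref{H_{2}'} are precisely what is needed.
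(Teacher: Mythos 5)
Your Morse-theoretic bookkeeping is essentially the paper's: the photography map into a low sublevel of the Nehari manifold, the resulting splitting $\mathcal P_{t}(\mathcal N_{\lambda}^{\,b})=\mathcal P_{t}(\Omega_{\lambda})+\mathcal Z_{t}$, and the two Morse relations (one for the sublevel, one for the contractible pair) summed and evaluated at $t=1$. The paper phrases this through the free functional on $H^{1}_{0,L}(\mathcal C_{\Omega_{\lambda}},y^{1-2\alpha})$ via the degree-shift identity $\mathcal P_{t}(I_{\lambda}^{\l},I_{\lambda}^{d})=t\,\mathcal P_{t}(\mathcal M_{\lambda}^{\l})$ (Lemma \ref{Multiplic}) and Corollary \ref{Quadrato}, whereas you work directly on the Nehari manifold; this is a cosmetic difference and your identity $\mathcal P_{t}(\mathcal N_{\lambda},\mathcal N_{\lambda}^{\,b})=t\bigl(\mathcal P_{t}(\mathcal N_{\lambda}^{\,b})-1\bigr)$ is the correct analogue. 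Your verification that $\mathrm K_{\lambda}(v)$ is compact (so that the critical-group machinery applies) is also what \eqref{H_{1}'}--\eqref{H_{2}'} are used for in the paper.

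The genuine gap is your step (c). You propose to confine the barycenter of \emph{every} function in a sublevel $\mathcal N_{\lambda}^{\,b}$ with $b<2c_{\infty}$ by appealing to a Struwe-type splitting, asserting that ``any non-concentrating low-energy function on $\mathcal N_{\lambda}$ would carry energy at least $2c_{\infty}$.'' That statement is false as written: a splitting/global-compactness lemma applies to Palais--Smale sequences, not to arbitrary elements of the Nehari manifold, and an individual $u\in\mathcal N_{\lambda}$ with $I_{\lambda}(u)$ slightly above $c_{\infty}$ and barycenter far from $\Omega_{\lambda}^{+}$ is not a priori near any critical point, so no dichotomy ``concentrates or has energy $\ge 2c_{\infty}$'' is available pointwise. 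The paper avoids this entirely: it takes the cutoff $b=c(B_{\lambda r})$ (the ground-state level of the ball $B_{\lambda r}$, which tends to $c(\mathbb R^{N})$ by Proposition \ref{4.2}, so no $2c_{\infty}$ threshold is ever needed) and proves the confinement by a \emph{comparison with a constrained minimization on an annulus}: if $\beta(v)=\tilde x\notin\Omega_{\lambda}^{+}$ then $\Omega_{\lambda}\subset A_{\lambda R,\lambda r}(\tilde x)$, so $I_{\lambda}(v)\ge a(R,r,\lambda)$, where $a(R,r,\lambda)$ is the infimum over the annulus Nehari manifold subject to the barycenter sitting at the center (Proposition \ref{fundamental}); Proposition \ref{4.1} then shows $\liminf_{\lambda}a(R,r,\lambda)>c(\mathbb R^{N})$ by a translation/splitting argument applied to a genuine minimizing (hence, after Ekeland, Palais--Smale) sequence, which is where Lemma \ref{gio16} and the maximum principle of Lemma \ref{maximumprinciple} enter. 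To repair your argument you would either have to reproduce this annulus comparison, or reduce the confinement claim to a statement about Palais--Smale sequences (e.g.\ by minimizing $I_{\lambda}$ over $\{u\in\mathcal N_{\lambda}:\beta(u)\notin\Omega_{\lambda}^{+}\}$ and showing that infimum exceeds $c(B_{\lambda r})$ for large $\lambda$); as it stands the step does not close.
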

Here $\mathcal P_{1}(\Omega_{\lambda})$ denotes the Poincar\'e polynomial of $\Omega_{\lambda}$
evaluated in $t=1$.
This  definition will be recalled later during the proof.
\medskip

To prove our results we use variational methods.
 Indeed a functional on a Hilbert space can be defined
in such a way that its critical points are exactly the solutions of \eqref{principal}. In this framework 
the assumption on $h$ are quite natural in order to deal with Nehari manifolds, Mountain Pass arguments
and Palais-Smale condition. We recall that if  $I$ is a $C^{1}$ functional on a Hilbert manifold $\mathcal M$
and $c\in \mathbb R$,
a sequence $\{v_{n}\}\subset \mathcal M$ is said to be a {\sl Palais-Smale sequence} for $I$ at level $c$
(briefly, a $(PS)_{c}$ sequence) if $I(v_{n})\to c$ and $I'(u_{n})\to 0$ in the  tangent bundle.
Furthermore, $I$ is said to satisfy the {\sl Palais-Smale condition} at level $c$
 if every $(PS)_{c}$ sequence has a convergent subsequence.

The functional related to our problem will turn out to be bounded from below 
on the ``manifold solution''
and verify the Palais-Smale
 condition at every level $c$, so 
the ``photography method'' of Benci and Cerami can be implemented
and the classical Ljusternick-Schnirelmann and Morse theory can be used to estimate the number of critical points
of the functional, that is, the number of solutions of \eqref{principal}.

 In the proof of our  results, we use some arguments that can be
found in \cite{Alves1,AFF,CGU}. However due to the presence of the Fractional Laplacian,
 some estimates more refined are need, such as in Lemma
 \ref{maximumprinciple}, Propositions \ref{4.1} and \ref{4.2}, for instance.

\medskip

\subsection{Notations.} 
Let us introduce here  few notations that will be used throughout  the paper.
\begin{itemize}
\item $B_{R}(x)$ denotes the open ball in $\mathbb R^{N}$
of radius $R$ centered in $x$; if $x=0$ we write $B_{R}.$
\item For $U\subset \mathbb R^{N},$ we denote with $\mathcal C_{U}$ the half
cylinder $U\times(0,+\infty)\subset \mathbb R^{N+1}.$ 
In particular $\mathcal C_{\mathbb R^{N}}=\mathbb R^{N}\times(0,+\infty).$
Whenever an element of $\mathcal C_{U}$ is written as $(x,y),$ it has always to be intended as $x\in U, y\in (0,+\infty).$
\item The lateral boundary of the cylinder is $\partial_{L} \mathcal C_{U}=\partial U \times[0,+\infty)$.

\end{itemize}
Other notations will be introduced along the paper as soon as we need. 
Finally, we will use $C_{1}, C_{2},\ldots$
to denote suitable positive constants, whose exact value may change from line to line.

\medskip

The plan of the paper is the following. In Section 
\ref{prelim} we recall some facts on the fractional Laplacian and 
write the variational framework in which we will work.
Section \ref{CC} is devoted to study the limit problem associated to our equation; in particular
 compactness results are proved  and, {\sl en passant}, also the existence of a ground state solution for \eqref{principal}.
 In Section \ref{Bary} we introduce the barycenter map and its properties. Moreover a careful
 analysis of the ground states level in terms of $\lambda$ is carried out. Finally, in Section \ref{1}
 we give the proof of Theorem \ref{Main2} and Theorem \ref{+1}, and finally 
 in Section \ref{2}, after recalling some facts and introducing some notations in classical Morse Theory,
 we prove Theorem \ref{Morse}.


\section{Preliminary results and the variational framework}\label{prelim}

\hspace{.45cm} In this section we start by introducing the functional framework necessary to apply variational methods
and recover some known results about the different forms of definition of the fractional power of the laplacian with Dirichlet boundary condition.

Let us consider the half cylinder with base $\Omega_{\lambda}$, i.e. $\mathcal C_{\Omega_{\lambda}}$ and let 
$$ H^1_{0,L}(\mathcal{C}_{\Omega_{\lambda}},y^{1-2\alpha}) 
= \Big\{v \in H^1(\mathcal{C}_{\Omega_{\lambda}}); \, v = 0 \, \, \mbox{on $\partial_L\mathcal{C}$ and $\|v\|_\alpha < \infty$}\Big\},$$
where
$$\|v\|_\alpha = \left( k_{\alpha}^{-1}\displaystyle\int_{\mathcal{C}_{\Omega_{\lambda}}} y^{1-2\alpha}|\nabla v|^2dxdy + \displaystyle\int_{\Omega_{\lambda}}|tr_{\Omega_{\lambda}}v(x)|^{2} dx \right)^\frac{1}{2},$$
 $k_{\alpha} = 2^{1-2\alpha}\Gamma(1-\alpha)/\Gamma(\alpha)$, $\alpha\in(0,1)$ and $tr_{\Omega_{\lambda}}$ is the trace operator given by  
$tr_{\Omega_{\lambda}} v = v(\cdot,0)$ for $v \in H^1_{0,L}(\mathcal{C}_{\Omega_{\lambda}},y^{1-2\alpha})$. It is not difficult to see that $H^1_{0,L}(\mathcal{C}_{\Omega_{\lambda}},y^{1-2\alpha})$ is a Hilbert space when endowed with the norm $\|\cdot\|_\alpha$, which comes from the following inner product
$$\langle v,w \rangle_\alpha = \int_{\mathcal{C}_{\Omega_{\lambda}}}k_\alpha^{-1}y^{1-2\alpha}\nabla v \nabla w dxdy + \int_{\Omega_{\lambda}}v(x,0)w(x,0) dx.$$

Consider the following space
$$\mathcal{V}_0^\alpha(\Omega_{\lambda}) = \Big\{ tr_{\Omega_{\lambda}} v; \, v \in H^1_{0,L}(\mathcal{C}_{\Omega_{\lambda}},y^{1-2\alpha})\Big\}.$$
By \cite[Proposition 2.1]{Sire}, there exists a trace operator from $H^1_{0,L}(\mathcal{C}_{\Omega_{\lambda}},y^{1-2\alpha})$ into the fractional Sobolev space $H^\alpha_0(\Omega_{\lambda})$.
Then  $\mathcal{V}_0^\alpha(\Omega_{\lambda})$ is a subspace of the fractional Sobolev space $H^{\alpha}(\Omega_{\lambda})$ and we consider it 
with the norm
$$
\|u\|_{\mathcal{V}_{0}^{\alpha}(\Omega_{\lambda})} = \left(\|u\|_{L^2(\Omega_{\lambda})}^2 + \int_{\Omega_{\lambda}} \int_{\Omega_{\lambda}}\frac{|u(x)-u(y)|^2}{|x-y|^{N+2\alpha}} dx dy\right)^{1/2}.
$$
Moreover, by Trace Theorem and embeddings of fractional Sobolev spaces (see  \cite[Theorem 6.7]{Nezza} for instance) it follows that
\begin{equation*}
\|tr_{\Omega_{\lambda}} v\|_{L^p(\Omega_{\lambda})} \leq C \|v\|_\alpha, \quad \forall v \in H^1_{0,L}(\mathcal{C}_{\Omega_{\lambda}},y^{1-2\alpha}),
\label{tracetheorem}
\end{equation*}
where $p \in (1,2^{*}_{\alpha}).$

By \cite[Proposition 2.1]{Sire} it holds that
$$\mathcal{V}_0^\alpha(\Omega_{\lambda}) = \left\{u \in L^2(\Omega_{\lambda}); \, u= \sum_{k = 1}^\infty b_k\varphi_k \, \, \mbox{ such that} \, \, \sum_{k = 1}^\infty b_k^2\mu_k^\alpha < \infty \right\},$$
where hereafter $(\mu_{k} ,\varphi_k)$ are the eigenpairs of $(-\Delta, H^1_0(\Omega_{\lambda}))$, 
$\mu_k$ repeated as much as its multiplicity.

Given $u \in C^\infty_0(\Omega_\lambda)$, with $u = \sum_{k = 1}^\infty b_k\varphi_k$, 
we define the operator
\begin{equation}\label{spectraldef}
(-\Delta)^\alpha u =  \sum_{k = 1}^\infty \mu_k^\alpha b_k \varphi_k
\end{equation}
which extends by density on
$\mathcal{V}_0^\alpha(\Omega_{\lambda})$.

Instead of working with this definition, 
we can get a local realization of $(-\Delta)^\alpha$
by adding one more dimension. Indeed, 
as proved in \cite[Section 2.1]{Sire}, for each $u \in \mathcal{V}_0^\alpha(\Omega_\lambda)$ there exists a unique 
$\tilde u \in H^1_{0,L}(\mathcal{C}_{\Omega_{\lambda}},y^{1-2\alpha})$, called the {\it $\alpha-$harmonic extension of $u$} such that
$$
\left\{
\begin{array}{ll}
-\mbox{div}(y^{1-2\alpha}\nabla \tilde u) = 0 & \mbox{in } \mathcal{C}_{\Omega_{\lambda}}\\
\tilde u = 0 & \mbox{on $\partial_L \mathcal{C}_{\Omega_{\lambda}}$}\\
\tilde u(\cdot,0) = u & \mbox{on $\Omega_{\lambda}$}.
\end{array}\right.
$$
Moreover, if $u = \sum_{k=1}^\infty b_k\varphi_k$ then
\begin{equation}
\tilde u(x,y) = \sum_{k=1}^\infty b_k\varphi_k(x)\psi(\mu_k^{1/ 2}y), \quad \forall (x,y) \in \mathcal{C}_{\Omega_{\lambda}},
\label{harmonicextension}
\end{equation}
where $\psi$ solves the Bessel equation
\begin{equation}\label{besselequation}
\left\{
\begin{array}{ll}
\psi''(s)+\displaystyle\frac{1-2\alpha}{s}\psi'(s) = \psi\,, & s>0\medskip \\ 
-\displaystyle\lim_{s\rightarrow0^+}s^{1-2\alpha}\psi{'}(s) = k_{\alpha} & \medskip\\
\psi(0) = 1. &
\end{array}\right.
\end{equation}
Now, for a fixed  $u \in \mathcal{V}_0^\alpha(\Omega_\lambda)$ 
 define the functional 
 $\displaystyle \left.\frac{1}{k_{\alpha}}\frac{\partial \tilde u}{\partial y^{\alpha}}\right|_{\Omega_\lambda \times \{0\}} 
 \in \mathcal{V}^{\alpha}_ 0(\Omega_\lambda)^*$ by
$$
\left< \frac{1}{k_{\alpha}}\frac{\partial \tilde u}{\partial y^{\alpha}}(\cdot,0), g \right>_{(\mathcal{V}^{\alpha}_ 0(\Omega_\lambda)^*,\mathcal{V}^{\alpha}_ 0(\Omega_\lambda))} := \frac{1}{k_\alpha}\int_{\mathcal{C}_{\Omega_{\lambda}}} y^{1-2\alpha}\nabla \tilde u \nabla \tilde g \,dxdy, \ \ \
g \in \mathcal{V}_0^\alpha(\Omega_{\lambda}).
$$
Integration by parts in the right hand side of the last equality explains the notation chosen to the functional, since
$$\left< \frac{1}{k_{\alpha}}\frac{\partial \tilde u}{\partial y^{\alpha}}(\cdot,0), g \right>_{(\mathcal{V}^{\alpha}_ 0(\Omega_\lambda)^*,\mathcal{V}^{\alpha}_ 0(\Omega_\lambda))} = \left< \frac{1}{k_{\alpha}}\frac{\partial \tilde u}{\partial y^{\alpha}}(\cdot,0), g \right>_{L^2(\Omega_\lambda)}
$$
for all $g \in \mathcal{V}^{\alpha}_ 0(\Omega_\lambda)$,
where
$$\frac{\partial \tilde u}{\partial y^{\alpha}}(x,0) =
 -\displaystyle\lim_{y\rightarrow0^+}y^{1-2\alpha}\frac{\partial \tilde u}{\partial y}(x,y)\
 \quad \forall x\in\Omega_{\lambda}.
$$

Then we can define an operator $A_\alpha:\mathcal{V}_0^\alpha(\Omega_{\lambda}) \to \mathcal{V}_0^\alpha(\Omega_{\lambda})^*$ such that
$$A_\alpha u := 
 \left.\frac{1}{k_{\alpha}}\frac{\partial \tilde u}{\partial y^{\alpha}}\right|_{\Omega_{\lambda} \times \{0\}}.$$
 Let us prove that the operators $A_\alpha$ and $(-\Delta)^\alpha$ 
 defined in \eqref{spectraldef} are in fact the same, i.e., that for all $u \in \mathcal{V}_0^\alpha(\Omega_{\lambda})$,
$$A_\alpha u = \sum_{k = 1}^\infty \mu_k^\alpha b_k\varphi_k, \quad \text{ where } \quad  u = \sum_{k = 1}^\infty b_k\varphi_k.$$
It is enough to show that for all $u \in \mathcal{V}_0^\alpha(\Omega_{\lambda})$,
$$\left< \frac{1}{k_{\alpha}}\frac{\partial \tilde u}{\partial y^{\alpha}}(\cdot,0), \varphi_k \right>_{(\mathcal{V}^{\alpha}_ 0(\Omega_\lambda)^*,\mathcal{V}^{\alpha}_ 0(\Omega_\lambda))} = \left< (-\Delta)^\alpha u,\varphi_k\right>_{L^2(\Omega_\lambda)}, \quad \mbox{for all $k \in \mathbb{N}$.}$$
For $u \in \mathcal{V}_0^\alpha(\Omega_{\lambda})$ and $k \in \mathbb{N}$, 
by (\ref{harmonicextension}), 
$$\tilde u(x,y) = \sum_{k=1}^\infty b_k\varphi_k(x)\psi(\mu_k^{1/ 2}y)\quad \text{ and  }\quad
\widetilde{\varphi_k} (x,y) = \varphi_k(x)\psi(\mu_k^{1/ 2}y).$$ 
Now, integration by  parts 
implies that, for  $y > 0$,
$$\int_{\Omega_\lambda}y^{1-2\alpha}\nabla \tilde u(x,y) \nabla \widetilde{\varphi_k}(x,y) dx 
= y^{1-2\alpha}b_k\left(\mu_k\psi(\mu_k^{1/2}y)^2 + \psi'_k(\mu_k^{1/2}y)^2\right).
$$
Then, by (\ref{besselequation})
\begin{eqnarray*}
\left< \frac{1}{k_{\alpha}}\frac{\partial \tilde u}{\partial y^{\alpha}}(\cdot,0), \varphi_k \right>_{(\mathcal{V}^{\alpha}_ 0(\Omega_\lambda)^*,\mathcal{V}^{\alpha}_ 0(\Omega_\lambda))} & = & 
\frac{1}{k_\alpha}\int_{\mathcal{C}_{\Omega_{\lambda}}} y^{1-2\alpha}\nabla \tilde u\nabla \widetilde{\varphi_k}\, dxdy\\
& = &\frac{1}{k_{\alpha}} \int_0^{+\infty}y^{1-2\alpha}b_k\left(\mu_k\psi(\mu_k^{1/2}y)^2 + \psi'_k(\mu_k^{1/2} y)^2\right)dy\\
& = &\frac{1}{k_{\alpha}} \left.\lim_{\eta \to 0^+}y^{1-2\alpha}\mu_k^{1/2}b_k\psi'(\mu_k^{1/2}y)\psi(\mu_k^{1/2}y)\right|_{y = \eta}\\
& = & b_k\mu_k^\alpha\\
& = & \left< (-\Delta)^\alpha u,\varphi_k\right>_{L^2(\Omega_\lambda)}.
\end{eqnarray*}
Hence,  in (\ref{principal}) we are going to understand $(-\Delta)^\alpha$ as $A_\alpha$. 

\medskip

Let us pass to the definition of weak solution for problems involving the fractional Laplacian.
We say that a function $u$ 
is  a solution of the linear problem 
$$
\left\{
\begin{array}{ll}
(-\Delta)^{\alpha} u = f(x)  & \mbox{in $\Omega_\lambda$}\\
u = 0 & \mbox{on $\partial\Omega_\lambda$},
\end{array} \right.
$$
where $f \in \mathcal{V}_0^\alpha(\Omega_{\lambda})^*$, if $u = tr_{\Omega_\lambda} v$, where $v \in H^1_{0,L}(\mathcal{C}_{\Omega_{\lambda}},y^{1-2\alpha})$ 
is a solution of
\begin{equation*}
\left\{
\begin{array}{ll}
-\mbox{div}(y^{1-2\alpha}\nabla v) = 0 & \mbox{ in } \mathcal{C}_{\Omega_{\lambda}}\\
v = 0 & \mbox{ on }\partial_L \mathcal{C}_{\Omega_{\lambda}}\\
\displaystyle \frac{1}{k_\alpha}\frac{\partial v}{\partial y^\alpha}(x,0) = f(x) &\  x \in \Omega_{\lambda}.
\end{array} \right.
\label{plinear}
\end{equation*}

Analogously, we say that  $u \in \mathcal{V}^{\alpha}_ 0(\Omega_{\lambda})$ is a weak solution of (\ref{principal}) if $u = tr_{\Omega_{\lambda}} v$, where $v \in H^1_{0,L}(\mathcal{C}_{\Omega_{\lambda}},y^{1-2\alpha})$ is a weak
solution of
\begin{equation*}
\left\{
\begin{array}{ll}
-\mbox{div}(y^{1-2\alpha}\nabla v) = 0 & \mbox{ in } \mathcal{C}_{\Omega_{\lambda}}\\
v = 0 & \mbox{ on }\partial_L \mathcal{C}_{\Omega_{\lambda}}\\
\displaystyle \frac{1}{k_\alpha}\frac{\partial v}{\partial y^\alpha}+v(x,0) = h(v(x,0)) &\  v>0 \ \text{ in }\ \Omega_{\lambda},
\end{array} \right.
\label{plinear}
\end{equation*}
that is,
\begin{equation*}\label{weakform}
\int_{\mathcal{C}_{\Omega_{\lambda}}}k_\alpha^{-1} y^{1-2\alpha}\nabla v \nabla \psi dxdy + \int_{\Omega_{\lambda}} v(x,0) \psi(x,0) dx= \int_{\Omega_{\lambda} } h(v(x,0))\psi(x,0)dx, \quad \forall \psi \in H^1_{0,L}(\mathcal{C}_{\Omega_{\lambda}},y^{1-2\alpha}).
\end{equation*}
As it is easy to see, this is equivalent to say that $v$ is  a critical point of the $C^{1}$ functional
$$
I_{\lambda}(v)=\frac{k_\alpha^{-1}}{2}\int_{\mathcal{C}_{\lambda}} y^{1-2\alpha}|\nabla v|^{2}dxdy + 
\frac{1}{2}\int_{\Omega_{\lambda}} |v(x,0)|^{2} dx - \displaystyle\int_{\Omega_{\lambda} } H(v(x,0))dx
$$
in $H^1_{0,L}(\mathcal{C}_{\Omega_\lambda},y^{1-2\alpha}).$


\medskip

It is not difficult to see that, in virtue of the assumptions on the nonlinearity
$h$, the functional $I_{\lambda}$ possesses a Mountain Pass Geometry:
the mountain pass level will be denoted with $c( \Omega_{\lambda})>0.$
We also define the
{\sl Nehari manifold} associated to $I_{\lambda}$  by
\begin{equation}\label{N}
{\cal{M}}_{\lambda}= \Big\{v \in H^1_{0,L}(\mathcal{C}_{\Omega_{\lambda}},y^{1-2\alpha})\setminus \{0\}: J_{\lambda}(v)=0\Big\}
\end{equation}
where
$$
J_{\lambda}(v):=I'_{\lambda}(v)[v]=k_\alpha^{-1}
\int_{\mathcal{C}_{\Omega_\lambda}} y^{1-2\alpha}|\nabla v|^{2}dxdy + 
\int_{\Omega_{\lambda}} |v(x,0)|^{2} dx - \int_{\Omega_{\lambda} } h(v(x,0))v(x,0)dx.
$$
We will need the following  properties about $\mathcal M_{\lambda}.$ They are standard, as well, and just based
on the hypothesis made on the nonlinearity; for a proof
one can follow e.g. \cite[Lemma 2.2]{BC3}.
\begin{lemma}\label{Nehariman} Let $\lambda>0$. The following propositions hold true:
\begin{itemize}
\item[1.] for every $u\in \mathcal M_{\lambda}$ it is $J_{\lambda}'(u)[u]<0;$ \smallskip
\item[2.] $\mathcal M_{\lambda}$ is a differentiable manifold radially diffeormorphic to the unit sphere 
$\mathrm S$ in $H^{1}_{0,L}(\mathcal C_{\Omega_{\lambda}}, y^{1-2\alpha})$ and bounded away from $0$; \smallskip
\item[3.] $I_\lambda$ is bounded from below on $\mathcal M_{\lambda}$ and 
\begin{equation}\label{clambda}
0<c(\Omega_{\lambda})=\inf_{\mathcal M_{\lambda}} I_{\lambda}=\inf_{u\neq0} \sup_{t>0} I_{\lambda}(tu).
\end{equation}
\end{itemize}
\end{lemma}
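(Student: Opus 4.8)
The plan is to verify the three items of Lemma~\ref{Nehariman} directly from the hypotheses \eqref{H_{0}}--\eqref{H_{4}}, following the classical Nehari manifold machinery (as in \cite[Lemma 2.2]{BC3}). Throughout, it is convenient to write, for $v\in H^1_{0,L}(\mathcal C_{\Omega_\lambda},y^{1-2\alpha})\setminus\{0\}$,
$$
\varphi_v(t):=I_\lambda(tv)=\frac{t^2}{2}\|v\|_\alpha^2-\int_{\Omega_\lambda}H(tv(x,0))\,dx,\qquad t>0,
$$
so that $\varphi_v'(t)=t^{-1}J_\lambda(tv)$ and $tv\in\mathcal M_\lambda$ iff $\varphi_v'(t)=0$.

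\smallskip

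\emph{Step 1 (the fibering map has a unique critical point, which is a strict maximum).} First I would show that for each $v\neq 0$ there is exactly one $t=t(v)>0$ with $tv\in\mathcal M_\lambda$. Using \eqref{H_{1}} and \eqref{H_{2}}, for every $\varepsilon>0$ there is $C_\varepsilon$ with $h(s)s\le \varepsilon s^2+C_\varepsilon|s|^q$, hence by the trace embedding $J_\lambda(tv)\ge t^2\|v\|_\alpha^2-\varepsilon t^2 C\|v\|_\alpha^2-C_\varepsilon t^q C\|v\|_\alpha^q$, which is positive for $t$ small; on the other hand \eqref{H_{3}} gives $H(s)\ge c_1 s^\theta-c_2$ for $s>0$, so $\varphi_v(t)\to-\infty$ as $t\to\infty$, forcing $J_\lambda(tv)<0$ for $t$ large. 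Thus a critical point exists by continuity. Uniqueness and the strict-maximum property both follow from \eqref{H_{4}}: writing $\varphi_v'(t)/t = \|v\|_\alpha^2-\int_{\Omega_\lambda}\frac{h(tv(x,0))}{tv(x,0)}v(x,0)^2\,dx$ on $\{v(\cdot,0)>0\}$, the monotonicity of $s\mapsto h(s)/s$ makes the right-hand side strictly decreasing in $t$, so it vanishes at most once. This simultaneously yields item~3's last identity $c(\Omega_\lambda)=\inf_{u\neq0}\sup_{t>0}I_\lambda(tu)$ together with the standard Mountain Pass characterization.

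\smallskip

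\emph{Step 2 (item 1 and item 3's infimum).} For $u\in\mathcal M_\lambda$ one has $\|u\|_\alpha^2=\int_{\Omega_\lambda}h(u(x,0))u(x,0)\,dx$, and
$$
J_\lambda'(u)[u]=2\|u\|_\alpha^2-\int_{\Omega_\lambda}\big(h'(u(x,0))u(x,0)^2+h(u(x,0))u(x,0)\big)\,dx
=\int_{\Omega_\lambda}\Big(h(u(x,0))u(x,0)-h'(u(x,0))u(x,0)^2\Big)\,dx.
$$
The integrand is $-u^2\big(h/s\big)'(u)\cdot u<0$ on $\{u(\cdot,0)>0\}$ by \eqref{H_{4}} (note $\big(h(s)/s\big)'>0$), giving $J_\lambda'(u)[u]<0$; this is exactly what makes $\mathcal M_\lambda$ a natural constraint. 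For the infimum in item~3, use \eqref{H_{3}}: on $\mathcal M_\lambda$,
$$
I_\lambda(u)=I_\lambda(u)-\tfrac1\theta J_\lambda(u)=\Big(\tfrac12-\tfrac1\theta\Big)\|u\|_\alpha^2+\int_{\Omega_\lambda}\Big(\tfrac1\theta h(u(x,0))u(x,0)-H(u(x,0))\Big)\,dx\ge\Big(\tfrac12-\tfrac1\theta\Big)\|u\|_\alpha^2\ge 0,
$$
so $I_\lambda$ is bounded below on $\mathcal M_\lambda$; positivity of $c(\Omega_\lambda)=\inf_{\mathcal M_\lambda}I_\lambda$ follows because $\mathcal M_\lambda$ is bounded away from $0$ (Step~3), combined with the bound just displayed.

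\smallskip

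\emph{Step 3 (item 2: the manifold structure).} I would show $\mathcal M_\lambda$ is bounded away from $0$: if $u\in\mathcal M_\lambda$, then from $\|u\|_\alpha^2=\int h(u(x,0))u(x,0)\le \varepsilon C\|u\|_\alpha^2+C_\varepsilon C\|u\|_\alpha^q$ with $\varepsilon$ small one gets $\|u\|_\alpha\ge \rho>0$. Since $J_\lambda\in C^1$ and $J_\lambda'(u)[u]<0\neq0$ on $\mathcal M_\lambda$ by item~1, $0$ is a regular value and $\mathcal M_\lambda$ is a $C^1$ manifold. Finally, the radial diffeomorphism with the unit sphere $\mathrm S$ is given by $m\colon \mathrm S\to\mathcal M_\lambda$, $m(v)=t(v)v$, with $t(v)$ the unique value from Step~1; continuity of $t(\cdot)$ follows from the implicit function theorem applied to the (transversal, by item~1) equation $J_\lambda(tv)=0$, and the inverse is $u\mapsto u/\|u\|_\alpha$.

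\smallskip

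The main obstacle is purely at the level of care rather than depth: one must be sure that all the growth estimates that in the local case are immediate carry over to the trace setting, i.e. that the embedding $v\mapsto \mathrm{tr}_{\Omega_\lambda}v$ from $H^1_{0,L}(\mathcal C_{\Omega_\lambda},y^{1-2\alpha})$ into $L^p(\Omega_\lambda)$ for $p\in(1,2^*_\alpha)$ — recalled in the preliminaries — is what powers every application of \eqref{H_{1}}--\eqref{H_{2}}, and that the functional $I_\lambda$ and the map $J_\lambda$ are genuinely $C^1$ there, so that the implicit function theorem and the regular-value argument are legitimate. Once that bookkeeping is in place the proof is the textbook one, so I would mostly refer to \cite[Lemma 2.2]{BC3} for the routine parts and only spell out the estimates that use the trace inequality.
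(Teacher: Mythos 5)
Your proof is correct and is precisely the standard Nehari-manifold argument that the paper itself does not write out, deferring instead to \cite[Lemma 2.2]{BC3}: the fibering-map analysis of $t\mapsto I_{\lambda}(tv)$ using \eqref{H_{1}}--\eqref{H_{3}}, the identity $J_{\lambda}'(u)[u]=\int_{\Omega_{\lambda}}\big(h(u)u-h'(u)u^{2}\big)\,dx<0$ via \eqref{H_{4}}, and the trace-embedding bookkeeping are all as intended. The one point both you and the paper gloss over is that, since $h$ vanishes on $(-\infty,0]$, rays through functions $v$ with $tr_{\Omega_{\lambda}}v^{+}\equiv 0$ never meet $\mathcal M_{\lambda}$, so the ``radial diffeomorphism with the unit sphere'' in item~2 must be read as restricted to the (open, dense) set of directions with nontrivial positive trace.
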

In particular every nonzero function $v\in H^{1}_{0,L}(\mathcal C_{\Omega_{\lambda}},y^{1-2\alpha})$ 
can be ``projected''  on $\mathcal M_{\lambda}$;
in other words we have an homeomorphism which just multiply a function by a positive constant
(depending on the function)
\begin{equation}\label{mappanehari}
v\in  H^1_{0,L}(\mathcal{C}_{\Omega_{\lambda}},y^{1-2\alpha})\setminus \{0\}\longmapsto t_{\lambda} v \in 
\mathcal M_{\lambda}. 
\end{equation}
It is  clear that $\mathcal M_{\lambda}$ is a natural constraint for $I_{\lambda}$ in the sense that
\begin{corollary}\label{17}
If $v$ is a critical point of $I_{\lambda}$ on ${\cal{M}}_{\lambda}$, then $v$ is a nontrivial critical point of $I_{\lambda}$ on $H^1_{0,L}(\mathcal{C}_{\Omega_{\lambda}},y^{1-2\alpha})$.
\end{corollary}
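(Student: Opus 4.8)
The plan is to prove Corollary~\ref{17} as a standard application of the Lagrange multiplier rule, using the fact that $\mathcal M_\lambda$ is a differentiable manifold cut out by the constraint $J_\lambda=0$ together with item~1 of Lemma~\ref{Nehariman}. First I would let $v\in\mathcal M_\lambda$ be a critical point of the restricted functional $I_\lambda|_{\mathcal M_\lambda}$. Since $\mathcal M_\lambda=J_\lambda^{-1}(0)\setminus\{0\}$ and, by item~1 of Lemma~\ref{Nehariman}, $J_\lambda'(v)[v]<0$ so that $0$ is a regular value of $J_\lambda$ on a neighbourhood of $\mathcal M_\lambda$, the tangent space to $\mathcal M_\lambda$ at $v$ is precisely $\ker J_\lambda'(v)$. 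The Lagrange multiplier theorem then gives a scalar $\mu\in\mathbb R$ with
\begin{equation*}
I_\lambda'(v)=\mu\, J_\lambda'(v) \quad\text{in } H^1_{0,L}(\mathcal{C}_{\Omega_{\lambda}},y^{1-2\alpha})^*.
\end{equation*}

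The key step is to show $\mu=0$. I would test the above identity against $v$ itself. Recall that $J_\lambda(v)=I_\lambda'(v)[v]$ by definition, so the left-hand side paired with $v$ gives $I_\lambda'(v)[v]=J_\lambda(v)=0$, because $v\in\mathcal M_\lambda$. Hence $\mu\,J_\lambda'(v)[v]=0$. By item~1 of Lemma~\ref{Nehariman}, $J_\lambda'(v)[v]<0$, and in particular it is nonzero; therefore $\mu=0$. Consequently $I_\lambda'(v)=0$ in the whole space $H^1_{0,L}(\mathcal{C}_{\Omega_{\lambda}},y^{1-2\alpha})$, i.e. $v$ is a free critical point of $I_\lambda$, and it is nontrivial since $\mathcal M_\lambda$ is bounded away from $0$ by item~2 of Lemma~\ref{Nehariman}. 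This proves the corollary.

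The only subtle point — and the one I would state carefully rather than the heart of the difficulty — is justifying that $\mathcal M_\lambda$ is genuinely a $C^1$ submanifold near $v$ with tangent space $\ker J_\lambda'(v)$, so that the multiplier rule applies; but this is exactly what item~2 of Lemma~\ref{Nehariman} (differentiable manifold, radially diffeomorphic to the unit sphere) and the strict inequality $J_\lambda'(v)[v]<0$ provide, the latter guaranteeing $J_\lambda'(v)\neq 0$ on $\mathcal M_\lambda$. No refined estimates specific to the fractional Laplacian are needed here: once the variational framework of Section~\ref{prelim} is in place, the argument is purely abstract and identical to the local case. I expect no real obstacle; the proof is a short paragraph.
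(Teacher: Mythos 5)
Your proof is correct and is exactly the standard natural-constraint argument the paper leaves implicit: the authors state Corollary \ref{17} without proof, referring only to the properties in Lemma \ref{Nehariman} (following \cite[Lemma 2.2]{BC3}), and the Lagrange multiplier computation you give — testing $I_\lambda'(v)=\mu J_\lambda'(v)$ against $v$ and using $J_\lambda(v)=0$ together with $J_\lambda'(v)[v]<0$ to force $\mu=0$ — is precisely the intended justification. No gap; nontriviality follows, as you say, from $\mathcal M_\lambda$ being bounded away from $0$.
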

Moreover, standard arguments show that the Palais-Smale sequences for $I_{\lambda}$ restricted to $M_{\lambda}$
are Palais-Smale sequences for the free functional $I_{\lambda}$, and  $I_{\lambda}$
satisfies the Palais-Smale condition on $\mathcal M_{\lambda}$ if and only if it satisfies the 
same condition on $H^{1}_{0,L}(\mathcal C_{\Omega_{\lambda}}, y^{1-2\alpha})$.

\begin{remark} \label{rem}
In the next sections we will use some auxiliary functionals: they differ from $I_{\lambda}$
just for the domain on which these functionals are defined.
In a similar way as in \eqref{N} we will define the  Nehari manifolds related to these functionals
and  it is clear 
that analogous properties to all those stated  on $\mathcal M_{\lambda}$ hold, 
since they are essentially  based on the structure
of the functional,  on the hypothesis made on the nonlinearity, and on how the Nehari manifold is defined.
For this reason, the above cited properties will be used without any other comment through the paper.
\end{remark}

\section{Compactness results and existence of a ground state solution for $I_{\lambda}$}\label{CC}

Now let us consider the half cylinder with base $\mathbb{R}^{N}$, 
$\mathcal{C}_{\mathbb R^{N}}$,
and define 
$$H^1(\mathcal C_{\mathbb R^{N}},y^{1-2\alpha}) = \{v \in H^1(\mathcal C_{\mathbb R^{N}}): \, \, \|v\|_{\mathcal C_{\mathbb R^{N}}} 
< \infty\},$$
where
$$\|v\|_{\mathcal C_{\mathbb R^{N}}} = \left( k_{\alpha}^{-1}\displaystyle\int_{\mathcal{C}_{\mathbb R^{N}}} y^{1-2\alpha}|\nabla v|^2dxdy + 
\displaystyle\int_{\mathbb{R}^{N}}|v(x,0)|^{2} dx \right)^{1/2}.$$
It is easy to see that $H^1(\mathcal C_{\mathbb R^{N}},y^{1-2\alpha})$ is a Hilbert space when endowed with the norm 
$\|\cdot\|_{\mathcal C_{\mathbb R^{N}}}$, which comes from the following inner product
$$\langle v,w \rangle_{\mathcal C_{\mathbb R^{N}}} = k_{\alpha}^{-1}\displaystyle\int_{\mathcal{C}_{\mathbb R^{N}}} y^{1-2\alpha}\nabla v \nabla wdxdy + 
\displaystyle\int_{\mathbb{R}^{N}}v(x,0)w(x,0) dx.$$
An important result we are going to use in this work is related with
the existence of a positive ground state solution of the {\sl limit problem}
\begin{equation}\label{limite}\tag{$P_{\infty}$}
(-\Delta)^{\alpha} u +u= h(u)  \ \ \mbox{in} \ \ \mathbb{R}^N,
\end{equation}
i.e., the least energy solution for the functional
$$
I_{\infty}(v)=\frac{k_\alpha^{-1}}{2}\int_{\mathcal{C}_{\mathbb R^{N}}}y^{1-2\alpha}|\nabla v|^{2} dxdy+ 
\displaystyle\int_{\mathbb{R}^N} |v(x,0)|^{2} dx - \int_{\mathbb{R}^N} H(v(x,0))dx.
$$
It is standard to see that $I_{\infty}$ has a Mountain Pass Geometry in $H^1(\mathcal C_{\mathbb R^{N}},y^{1-2\alpha})$, whose mountain pass level is denoted by $c(\mathbb R^{N})>0.$
Moreover, we can define the Nehari manifold associated to $I_{\infty}$ 
$$\mathcal M_{\infty}=\Big\{v\in H^{1}(\mathcal C_{\mathbb R^{N}}, y^{1-2\alpha}): I_{\infty}'(v)[v]=0\Big\}$$
and standard computations give
$$0<c(\mathbb R^{N})
=\inf_{\mathcal{M}_{\infty}}I_{\infty}.
$$
%
%
%
%
The theorem below states the existence of a ground
state solution for \eqref{limite}, hence $c(\mathbb R^{N})$ is achieved
on a function of mountain pass type.
The result is  known in the literature (it can be obtained with similar arguments used in \cite[Theorem 3.1]{Alves1})
but for completeness, and since it will be very useful for us, we prefer to give the proof.


\begin{lemma}\label{gio16}
Let $\{v_{n}\} \subset {\cal{M}}_{\infty}$ be a sequence  satisfying
$I_{\infty}(v_{n})\rightarrow c(\mathbb R^{N})$. Then, eighter 
\begin{itemize}
\item[a)] $\{v_{n}\}$ has a strongly convergent subsequence in
$H^1(\mathcal C_{\mathbb R^{N}},y^{1-2\alpha})$ 
\end{itemize}
 or 
 \begin{itemize}
\item[b)] there exists a sequence $\{x_{n}\} \subset \mathbb R^{N}$
 such that, up to a subsequence, $|x_{n}|\rightarrow +\infty$ and $\overline{v}_{n}(x,y):=v_{n}(x-x_{n},y)$ strongly converges in $H^1(\mathcal C_{\mathbb R^{N}},y^{1-2\alpha})$.
\end{itemize}
In particular, there exists a positive minimizer, hereafter denoted by $\mathfrak w_{\infty}$, for $c(\mathbb R^{N})$.
\end{lemma}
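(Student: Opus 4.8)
The plan is to prove this dichotomy via a concentration-compactness argument applied to a minimizing sequence on the Nehari manifold, combined with the sub-additivity of the ground state level under ``splitting at infinity''. First I would recall that since $\{v_n\}\subset\mathcal M_\infty$ and $I_\infty(v_n)\to c(\mathbb R^N)$, the standard relation $I_\infty(v_n)=I_\infty(v_n)-\tfrac12 I_\infty'(v_n)[v_n]=\int_{\mathbb R^N}\big(\tfrac12 h(v_n)v_n-H(v_n)\big)dx$ together with (\ref{H_{3}}) shows $\{v_n\}$ is bounded in $H^1(\mathcal C_{\mathbb R^N},y^{1-2\alpha})$; indeed on the Nehari manifold $I_\infty$ controls the norm from above and below, and $\mathcal M_\infty$ is bounded away from $0$. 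Hence, up to a subsequence, $v_n\rightharpoonup v$ weakly. The key dichotomy comes from applying the fractional/weighted version of P.-L. Lions' concentration-compactness lemma to the sequence of measures associated to $\rho_n(x)=|tr v_n(x)|^2$ (or to the full gradient density on the cylinder): vanishing, dichotomy, or compactness. One must rule out vanishing and dichotomy, leaving compactness, which (after translating by the centers of mass $x_n$) yields strong convergence; the two alternatives a) and b) correspond precisely to whether the translations $x_n$ stay bounded or escape to infinity.

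The main technical ingredients I would assemble are: (i) a \emph{vanishing lemma} — if $\sup_{z\in\mathbb R^N}\int_{B_1(z)}|tr v_n|^2\,dx\to 0$ then $tr v_n\to 0$ in $L^p(\mathbb R^N)$ for all $p\in(2,2^*_\alpha)$ (via the trace embedding and an interpolation/covering argument), which combined with (\ref{H_{1}})-(\ref{H_{2}}) forces $\int h(v_n)v_n\to 0$, hence $\|v_n\|_{\mathcal C_{\mathbb R^N}}\to 0$, contradicting $v_n\in\mathcal M_\infty$ and boundedness away from zero; (ii) a \emph{splitting / Brezis-Lieb type lemma} for the weighted Dirichlet energy on the cylinder and for the nonlinear term, so that if dichotomy occurs one can write $v_n = w_n + z_n + o(1)$ with $w_n$, $z_n$ asymptotically supported far apart, each ``almost'' on a Nehari manifold, and with $c(\mathbb R^N)\ge c(\mathbb R^N)+c(\mathbb R^N)$ after projecting each piece onto $\mathcal M_\infty$ — impossible since $c(\mathbb R^N)>0$. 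This strict sub-additivity is what kills dichotomy. Once vanishing and dichotomy are excluded, the concentrated mass gives a sequence $x_n$ with $\int_{B_R(x_n)}|tr v_n|^2\,dx\ge\delta>0$ for some $R,\delta$; setting $\overline v_n(x,y)=v_n(x-x_n,y)$, by translation invariance of $I_\infty$ and $\mathcal M_\infty$ we still have $\overline v_n\in\mathcal M_\infty$, $I_\infty(\overline v_n)\to c(\mathbb R^N)$, and now $\overline v_n\rightharpoonup \overline v\neq 0$. The weak limit $\overline v$ is a nontrivial critical point of $I_\infty$, so $\overline v\in\mathcal M_\infty$ and $I_\infty(\overline v)\ge c(\mathbb R^N)$; by weak lower semicontinuity of the norm and the same Brezis-Lieb decomposition, $I_\infty(\overline v_n)\to I_\infty(\overline v)$ forces $\|\overline v_n\|_{\mathcal C_{\mathbb R^N}}\to\|\overline v\|_{\mathcal C_{\mathbb R^N}}$, hence $\overline v_n\to\overline v$ strongly. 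This gives alternative b) when $|x_n|\to\infty$ (along a subsequence) and alternative a) when $\{x_n\}$ is bounded (then one may take $x_n=0$ and $v_n\to v$ directly).

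Finally, to extract the positive minimizer $\mathfrak w_\infty$: take any minimizing sequence for $c(\mathbb R^N)$ on $\mathcal M_\infty$ (which exists by (\ref{clambda})-type characterization and Ekeland), which one may assume to be Palais-Smale by Ekeland's variational principle; applying the dichotomy just established and, in case b), passing to the translates $\overline v_n$, we obtain in either case a strong limit $w\in\mathcal M_\infty$ with $I_\infty(w)=c(\mathbb R^N)$, i.e.\ a ground state. Replacing $w$ by $|w|$ (the energy does not increase, since $|\nabla|w||\le|\nabla w|$ a.e.\ and $H$ depends on $w$ only through $tr w$ with $H(s)=0$ for $s\le 0$, so we may also assume $tr w\ge 0$) and invoking the strong maximum principle for the degenerate operator $-\mathrm{div}(y^{1-2\alpha}\nabla\cdot)$ — available via the Harnack inequality of Caffarelli-Silvestre / Cabré-Sire, and which the paper anticipates in Lemma~\ref{maximumprinciple} — we conclude $w>0$, giving the desired $\mathfrak w_\infty$.

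I expect the main obstacle to be step (ii), the strict sub-additivity / exclusion of dichotomy: making the splitting rigorous for the \emph{weighted} gradient energy on the half-cylinder (as opposed to the cleaner $H^1(\mathbb R^N)$ or pure fractional Sobolev setting) requires care with the extra variable $y$ and with cut-off functions that respect the lateral boundary condition $v=0$ on $\partial_L\mathcal C_{\mathbb R^N}$; one must verify that the cut-off pieces still lie in $H^1_{0,L}$ and that the cross terms in both the quadratic form and $\int H(v_n)$ vanish. The vanishing case is comparatively routine given the trace embedding, and the recovery of strong convergence from equality of energies is standard once the Brezis-Lieb splitting is in hand.
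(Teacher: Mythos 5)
Your strategy is sound and would prove the lemma, but it is a genuinely different and substantially heavier route than the one the paper takes. The paper does not run the full Lions trichotomy and never needs a Brezis--Lieb/splitting lemma for the weighted energy on the half-cylinder (the step you correctly identify as the delicate one). Instead, it first upgrades $\{v_{n}\}$ to a $(PS)_{c(\mathbb R^{N})}$ sequence via Ekeland (you only invoke Ekeland at the very end, but you need it from the start to know the weak limit is a critical point), and then splits into just two cases according to whether the weak limit $v$ is zero or not. If $v\neq 0$, then $v$ is a critical point, hence $v\in\mathcal M_{\infty}$ and $I_{\infty}(v)\geq c(\mathbb R^{N})$; writing $I_{\infty}(v)=I_{\infty}(v)-\frac{1}{\theta}I_{\infty}'(v)[v]=\left(\frac12-\frac1\theta\right)\|v\|^{2}_{\mathcal C_{\mathbb R^{N}}}+\int\left(\frac1\theta h(v)v-H(v)\right)dx$, with both terms nonnegative by \eqref{H_{3}} and lower semicontinuous (weak lsc of the norm plus Fatou), forces $I_{\infty}(v)=c(\mathbb R^{N})$ and then $\|v_{n}\|_{\mathcal C_{\mathbb R^{N}}}\to\|v\|_{\mathcal C_{\mathbb R^{N}}}$, i.e.\ strong convergence --- no mass can escape, so ``dichotomy'' is excluded for free. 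If $v=0$, only the vanishing alternative of concentration-compactness is used (the version in \cite[Lemma 2.2]{Felmer}) to produce the translation points $x_{n}$, after which one reduces to the first case. So your strict-subadditivity argument $c\geq 2c$ is replaced in the paper by the Fatou identity above, which buys a much shorter proof at the cost of being specific to Nehari-constrained minimizing sequences; your version is more robust (it would also handle general Palais--Smale levels below $2c(\mathbb R^{N})$) but obliges you to verify the splitting of the weighted Dirichlet form with cut-offs on $\mathcal C_{\mathbb R^{N}}$. Your treatment of positivity via $|w|$ and the maximum principle is actually more explicit than the paper's (which essentially leaves positivity to \eqref{H_{0}} and Lemma \ref{maximumprinciple}); just note that $|w|$ need not lie on $\mathcal M_{\infty}$, so one should pass through $\max_{t>0}I_{\infty}(t|w|)\leq\max_{t>0}I_{\infty}(tw)=c(\mathbb R^{N})$ using \eqref{clambda}-type characterization before invoking the strong maximum principle.
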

\begin{proof} By the Ekeland Variational Principle we can assume without loss of generality that $\{v_n\}$ is a  $(PS)_{c(\mathbb R^N)}$ 
sequence for $I_\infty$ on $\mathcal{M}_\infty$ and then, by very known arguments, it follows that it is a $(PS)_{c(\mathbb R^N)}$ 
sequence to $I_\infty$ on $H^1(\mathcal C_{\mathbb R^{N}},y^{1-2\alpha})$ . In a standard way one can prove that $\{v_{n}\}$ is 
bounded in $H^1(\mathcal C_{\mathbb R^{N}},y^{1-2\alpha})$ and then, up to a subsequence, $v_{n}\rightharpoonup v$ 
in $H^1(\mathcal C_{\mathbb R^{N}},y^{1-2\alpha})$. 

\medskip

{\sl First case:} $v \neq 0$. It is a simple matter to prove in this case  $I'_\infty(v) = 0$.
It follows from the  Fatou Lemma, \eqref{H_{3}} and the weak lower semicontinuity of the norm that 
\begin{eqnarray*}
c(\mathbb R^N) & \leq & I_\infty(v)\\
& = & I_\infty(v) - \frac{1}{\theta}I'_\infty(v)[v]\\
& = & \left(\frac{1}{2} - \frac{1}{\theta}\right)\|v\|^2_{\mathcal C_{\mathbb R^{N}}} + \int_{\mathbb{R}^N}\left(\frac{1}{\theta}h(v)v - H(v)\right)dx\\
& \leq & \liminf_{n \to \infty}\left[ \left(\frac{1}{2} - \frac{1}{\theta}\right)\|v_n\|^2_{\mathcal C_{\mathbb R^{N}}} + \int_{\mathbb{R}^N}\left(\frac{1}{\theta}h(v_n)v_{n} - H(v_n)\right)dx\right]\\
& = & c(\mathbb R^N),
\end{eqnarray*}
which implies that $I_\infty(v) = c(\mathbb R^N)$. Now let us prove that $v_n \to v$ in $H^1(\mathcal C_{\mathbb R^{N}},y^{1-2\alpha})$
 and for this it is enough to show that  $\|v_n\|_{\mathcal C_{\mathbb R^{N}}} \to \|v\|_{\mathcal C_{\mathbb R^{N}}}$. 
 By the weak semicontinuity of the norm it follows that 
\begin{equation}
\|v\|_{\mathcal C_{\mathbb R^{N}}} \leq \liminf_{n \to \infty} \|v_n\|_{\mathcal C_{\mathbb R^{N}}}.
\label{liminf}
\end{equation}
Supposing by contradiction that 
$$\limsup_{n \to \infty} \|v_n\|_{\mathcal C_{\mathbb R^{N}}} > \|v\|_{\mathcal C_{\mathbb R^{N}}},$$
Fatou Lemma implies that 
\begin{eqnarray*}
c(\mathbb R^N) & = & \limsup_{n \to \infty}\left(\frac{1}{2} - \frac{1}{\theta}\right)\|v_n\|_{\mathcal C_{\mathbb R^{N}}}^2 + \limsup_{n \to \infty}\int_{\mathbb{R}^N}\left(\frac{1}{\theta}h(v_n)v_n - H(v_n)\right)dx\\
& > & \left(\frac{1}{2} - \frac{1}{\theta}\right)\|v\|_{\mathcal C_{\mathbb R^{N}}}^2 + \int_{\mathbb{R}^N}\left(\frac{1}{\theta}h(v)v - H(v)\right)dx\\
& = & c(\mathbb R^N),
\end{eqnarray*}
which is a contradiction. Then it follows that 
$$\limsup_{n \to \infty} \|v_n\|_{\mathcal C_{\mathbb R^{N}}}\leq\|v\|_{\mathcal C_{\mathbb R^{N}}}$$
and this together with (\ref{liminf}) implies that $v_n \to v$ in $H^1(\mathcal C_{\mathbb R^{N}},y^{1-2\alpha})$.

\medskip

{\sl Second case: $v = 0$.}
Then $\{v_n\}$ is not strongly convergent; indeed, if this were not the case,
we would have a contradiction  with the fact that $I_\infty(v_n) \to c(\mathbb R^N) > 0$. Hence there are $R,\gamma>0$ and 
$\{x_{n}\}\subset\mathbb{R}^{N}$ such that, up to a subsequence 
$$
\int_{B_{R}(x_{n})}|v_{n}(x,0)|^{2} dx \geq \gamma >0
$$
In fact, on the contrary, by the version of concentration compactness principle given in \cite[Lemma 2.2]{Felmer}, $v_n(\cdot,0) \to 0$ in $L^q(\mathbb{R}^N)$ for $2 < q < 2^*_\alpha$. By this fact together with conditions  \eqref{H_{0}}-\eqref{H_{4}}, implies that
$$
I_\infty(v_n) = \int_{\mathbb{R}^N}\left(\frac{1}{2}h(v_n(x,0))v_n(x,0) - H(v_n(x,0))\right)dx + o_n(1) = o_n(1),
$$
which contradicts again $I_{\infty}(v_{n})\rightarrow c(\mathbb R^{N})> 0$. Moreover, since $v = 0$, it follows that $|x_{n}|\rightarrow +\infty$. This follows because otherwise Sobolev embedding can be used to prove that $v \neq 0$.
Since $\mathbb{R}^{N}$ is invariant by translation, defining $\overline{v}_{n}(x,y):=v_{n}(x-x_{n},y)$ we still have
 a $(PS)_{c(\mathbb R^N)}$ 
 sequence for $I_\infty$, which is contained on $\mathcal{M}_\infty$
 and is bounded in $H^1(\mathcal{C}_{\mathbb R^{N}},y^{1-2\alpha})$. Then $\overline{v}_{n}\rightharpoonup \overline{v}\neq 0$ and hence, by the first case, $\overline{v}_n \to \overline{v}$ 
 in $H^1(\mathcal{C}_{\mathbb R^{N}},y^{1-2\alpha})$, $I_\infty(\overline{v}) = c(\mathbb R^N)$ and $\overline{v}$ is a ground state for $I_\infty$.
\end{proof}

For what concerns our functional we have

\begin{lemma} For every $\lambda>0$, the functional $I_{\lambda}$ satisfies the Palais-Smale condition 
on $H^1_{0,L}(\mathcal{C}_{\Omega_{\lambda}},y^{1-2\alpha})$, and hence on $\mathcal M_{\lambda}$.
 \label{lema_PS_unconstrained}
\end{lemma}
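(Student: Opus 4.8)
The key structural fact that makes this lemma true (and that fails for $I_{\infty}$) is that $\Omega_{\lambda}$ is \emph{bounded}, so the trace embedding $H^1_{0,L}(\mathcal{C}_{\Omega_{\lambda}},y^{1-2\alpha})\hookrightarrow L^{q}(\Omega_{\lambda})$ is \emph{compact} for every $q\in[1,2^{*}_{\alpha})$ (by the Trace Theorem of \cite{Sire} together with the compact Sobolev embeddings for $H^{\alpha}_{0}(\Omega_{\lambda})$ on a bounded domain, cf. \cite{Nezza}). Granted this, the proof follows the classical Brezis--Lieb/Ambrosetti--Rabinowitz scheme. Let $\{v_{n}\}\subset H^1_{0,L}(\mathcal{C}_{\Omega_{\lambda}},y^{1-2\alpha})$ be a $(PS)_{c}$ sequence, i.e. $I_{\lambda}(v_{n})\to c$ and $I'_{\lambda}(v_{n})\to 0$.

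\emph{Step 1: boundedness.} Using \eqref{H_{3}}, for $n$ large,
$$
c+1+\|v_{n}\|_{\alpha}\ \geq\ I_{\lambda}(v_{n})-\frac{1}{\theta}I'_{\lambda}(v_{n})[v_{n}]
\ \geq\ \Bigl(\frac{1}{2}-\frac{1}{\theta}\Bigr)\|v_{n}\|_{\alpha}^{2},
$$
which, since $\theta>2$, forces $\{v_{n}\}$ to be bounded in $H^1_{0,L}(\mathcal{C}_{\Omega_{\lambda}},y^{1-2\alpha})$. Hence, up to a subsequence, $v_{n}\rightharpoonup v$ weakly in $H^1_{0,L}(\mathcal{C}_{\Omega_{\lambda}},y^{1-2\alpha})$.

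\emph{Step 2: convergence of the nonlinear term.} By the compact trace embedding, $v_{n}(\cdot,0)\to v(\cdot,0)$ strongly in $L^{q}(\Omega_{\lambda})$ for all $q\in[1,2^{*}_{\alpha})$, and (passing to a further subsequence) a.e. in $\Omega_{\lambda}$ with an $L^{q}$-dominating function. Conditions \eqref{H_{1}} and \eqref{H_{2}} give $|h(s)|\leq \varepsilon|s|+C_{\varepsilon}|s|^{q-1}$, so a standard application of the generalized dominated convergence theorem yields
$$
\int_{\Omega_{\lambda}}h(v_{n}(x,0))\,\bigl(v_{n}(x,0)-v(x,0)\bigr)\,dx\ \longrightarrow\ 0 .
$$

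\emph{Step 3: strong convergence.} Since $\{v_{n}-v\}$ is bounded and $I'_{\lambda}(v_{n})\to 0$, we have $I'_{\lambda}(v_{n})[v_{n}-v]\to 0$; writing this out,
$$
\langle v_{n},v_{n}-v\rangle_{\alpha}\ =\ \int_{\Omega_{\lambda}}h(v_{n}(x,0))\bigl(v_{n}(x,0)-v(x,0)\bigr)dx+o_{n}(1)\ =\ o_{n}(1),
$$
using the notation of the inner product $\langle\cdot,\cdot\rangle_{\alpha}$ (the $L^{2}$ part of the norm also being handled by Step 2). On the other hand $\langle v,v_{n}-v\rangle_{\alpha}\to 0$ by weak convergence, so $\|v_{n}-v\|_{\alpha}^{2}=\langle v_{n}-v,v_{n}-v\rangle_{\alpha}\to 0$, i.e. $v_{n}\to v$ strongly. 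This proves the Palais--Smale condition on $H^1_{0,L}(\mathcal{C}_{\Omega_{\lambda}},y^{1-2\alpha})$ at every level $c$; the statement ``and hence on $\mathcal{M}_{\lambda}$'' is then immediate from the equivalence recorded after Corollary \ref{17}, namely that $(PS)$ sequences for $I_{\lambda}|_{\mathcal M_{\lambda}}$ are $(PS)$ sequences for the free functional $I_{\lambda}$.

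\emph{Main difficulty.} There is no real obstacle here beyond identifying the right tool: once the compactness of the trace embedding on the bounded domain $\Omega_{\lambda}$ is invoked, everything is routine. The only point requiring mild care is Step 2, i.e. making sure the subcritical growth \eqref{H_{1}}--\eqref{H_{2}} is genuinely used to pass to the limit in $\int h(v_{n})(v_{n}-v)$; this is precisely where the hypothesis $q<2^{*}_{\alpha}$ enters.
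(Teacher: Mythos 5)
Your proof is correct and follows essentially the same route as the paper: boundedness from \eqref{H_{3}}, weak convergence plus the compact trace embedding into $L^{s}(\Omega_{\lambda})$ for $s<2^{*}_{\alpha}$, and then the standard computation with $I'_{\lambda}(v_{n})[v_{n}-v]$ (which the paper leaves implicit as ``standard calculations'' but you spell out). No discrepancies.
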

\begin{proof}
Let $\{v_{n}\}\subset H^1_{0,L}(\mathcal{C}_{\Omega_{\lambda}},y^{1-2\alpha})$ be a sequence such that
$$
I_{\lambda}(v_{n})\rightarrow c\quad \mbox{and} \quad
I'_{\lambda}(v_{n})\rightarrow 0.
$$
Thus, by \eqref{H_{3}} we get 
$$
C_{1}+o_{n}(1)\|v_{n}\|_{\alpha} \geq  I_{\lambda}(v_{n})-\frac{1}{\theta} I'_{\lambda}(v_{n}) [v_{n}] \geq   
\left(\frac{1}{2}-\frac{1}{\theta}\right)\|v_{n}\|^{2}_{\alpha},
$$
which gives that $\{v_{n}\}$ is bounded in
$ H^1_{0,L}(\mathcal{C}_{\Omega_{\lambda}},y^{1-2\alpha})$. 
Then we may assume that, up to a subsequence,
 $v_{n}\rightharpoonup v$ in
$ H^1_{0,L}(\mathcal{C}_{\Omega_{\lambda}},y^{1-2\alpha})$ and 
hence $tr_{\Omega_\lambda}v_{n}\rightarrow tr_{\Omega_\lambda}v$ in
$L^{s}(\Omega_{\lambda})$, with $2\leq s < 2^{*}_{\alpha}$. Thus, since the nonlinearity $h$ has subcritical growth, by standard calculations, 
we see that $I_{\lambda}$ satisfies the Palais-Smale condition.
\end{proof}

Then, taking into account that $I_{\lambda}$
is  bounded from below on $\mathcal M_{\lambda}$ we have
\begin{theorem}\label{existence}
For every $\lambda>0$, 
$c(\Omega_{\lambda})$ is achieved on a ground state solution denoted with  $\mathfrak w_{\Omega_{\lambda}}.$
\end{theorem}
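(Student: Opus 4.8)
The plan is to show that the infimum $c(\Omega_\lambda)=\inf_{\mathcal M_\lambda} I_\lambda$ is attained, by combining the lower bound/boundedness facts from Lemma~\ref{Nehariman} with the compactness provided by Lemma~\ref{lema_PS_unconstrained}. Concretely, I would take a minimizing sequence $\{v_n\}\subset\mathcal M_\lambda$ for $I_\lambda$, i.e.\ $I_\lambda(v_n)\to c(\Omega_\lambda)$. Since $\mathcal M_\lambda$ is a complete $C^1$ manifold (being radially diffeomorphic to the unit sphere by part~2 of Lemma~\ref{Nehariman}) and $I_\lambda$ is bounded below on it (part~3), Ekeland's Variational Principle applied on $\mathcal M_\lambda$ lets me assume, without loss of generality, that $\{v_n\}$ is a $(PS)_{c(\Omega_\lambda)}$ sequence for $I_\lambda$ restricted to $\mathcal M_\lambda$.

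Next I would invoke the remark following Corollary~\ref{17}: a Palais--Smale sequence for $I_\lambda$ constrained to $\mathcal M_\lambda$ is a Palais--Smale sequence for the free functional $I_\lambda$ on $H^1_{0,L}(\mathcal C_{\Omega_\lambda},y^{1-2\alpha})$. By Lemma~\ref{lema_PS_unconstrained}, $I_\lambda$ satisfies $(PS)_c$ at every level, so $\{v_n\}$ has a subsequence converging strongly to some $v$ in $H^1_{0,L}(\mathcal C_{\Omega_\lambda},y^{1-2\alpha})$. Strong convergence and continuity of $I_\lambda$ and of $J_\lambda$ then give $I_\lambda(v)=c(\Omega_\lambda)>0$ and $J_\lambda(v)=0$; in particular $v\neq 0$ (since $c(\Omega_\lambda)>0$ rules out $v=0$, as $I_\lambda(0)=0$), so $v\in\mathcal M_\lambda$ and $v$ minimizes $I_\lambda$ over $\mathcal M_\lambda$. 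By Corollary~\ref{17}, $v$ is a nontrivial critical point of the free functional $I_\lambda$, hence a weak solution of \eqref{principal}; being a minimizer on the Nehari manifold it is a ground state, which I denote $\mathfrak w_{\Omega_\lambda}$.

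It remains to check positivity of $\mathfrak w_{\Omega_\lambda}$ (so that it genuinely solves the problem with the sign constraint $u>0$). Here one uses the structure of $h$: by \eqref{H_{0}}, $h(s)=0$ for $s\le 0$, so testing the weak formulation with the negative part $v^-$ of the trace shows $v^-\equiv 0$, i.e.\ $tr_{\Omega_\lambda}v\ge 0$; the extension $v$ then satisfies $-\mathrm{div}(y^{1-2\alpha}\nabla v)=0$ with nonnegative boundary trace, and a Harnack-type / maximum principle argument for the degenerate operator (of the type collected in Lemma~\ref{maximumprinciple}) upgrades this to $\mathfrak w_{\Omega_\lambda}=tr_{\Omega_\lambda}v>0$ in $\Omega_\lambda$.

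The only genuinely delicate point is the application of the maximum principle for the weighted operator $-\mathrm{div}(y^{1-2\alpha}\nabla\,\cdot\,)$ to obtain strict positivity; everything else is routine given the compactness (Lemma~\ref{lema_PS_unconstrained}) and the Nehari-manifold machinery (Lemma~\ref{Nehariman}, Corollary~\ref{17}) already established. Since the statement only asserts that $c(\Omega_\lambda)$ is achieved, even the strict positivity can be deferred; the heart of the proof is simply: minimizing sequence $\Rightarrow$ $(PS)$ sequence (Ekeland) $\Rightarrow$ strong convergence (Palais--Smale condition) $\Rightarrow$ minimizer attained.
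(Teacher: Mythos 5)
Your argument is correct and is exactly the route the paper intends: the theorem is stated as an immediate consequence of $I_\lambda$ being bounded below on $\mathcal M_\lambda$ (Lemma~\ref{Nehariman}) together with the Palais--Smale condition (Lemma~\ref{lema_PS_unconstrained}), via the standard Ekeland-plus-compactness minimization that you spell out. The paper gives no further details, so your write-up simply makes explicit what the authors leave implicit; no discrepancy to report.
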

%

\section{The Barycenter map and behavior of the mountain pass levels}\label{Bary}

In this section, we study the behavior of some minimax levels with respect to the parameter $\lambda$. 
To do so, some preliminaries are in order.

Without any loss of generality, from now on we assume that $0\in \Omega_{\lambda}$.
Following  \cite{BC3}, for $v
\in H^1_{0,L}(\mathcal{C}_{\Omega_{\lambda}},y^{1-2\alpha})$ with compact support
and such that $tr_{\Omega_\lambda}v^{+} \not\equiv 0$, we  define the {\sl barycenter} or {\sl center of mass}
 of $v$ in the following way: first consider the ``trivial'' extension of $v^{+}(\cdot,0)=tr_{\Omega_\lambda}v^{+}$
 to the whole $\mathbb R^{N}$ (denoted by the same symbol) and then set 
$$
\beta(v):=\beta (v^{+}(\cdot,0))=\frac{\displaystyle\int_{\mathbb{R}^N}x |v^{+}(x,0)|^{2}dx}{\displaystyle\int_{\mathbb{R}^N}|v^{+}(x,0))|^{2}dx}
\in \mathbb R^{N}.
$$
For  $R>r>0$ let
us denote by $A_{R,r}(\tilde x)$ the open anulus in $\mathbb R^{N}$ centered in $\tilde x$
$$
A_{R,r}(\tilde x)=B_{R}(\tilde x)\setminus \overline{B}_{r}(\tilde x).
$$
Define the functional on $H^{1}_{0,L}(\mathcal C_{A_{\lambda R, \lambda r}(\tilde x)}, y^{1-\alpha})$
\begin{multline}\label{chap1}
\widehat{I}_{\lambda, \tilde x}(v)= \frac{1}{2}\int_{\mathcal{C}_{A_{\lambda
R,\lambda r}(\tilde x)}} y^{1-2\alpha}|\nabla v|^{2} dxdy + \frac{1}{2} \int_{A_{\lambda R,\lambda r}(\tilde x)}|v(x,0)|^{2} dx \\ 
-   \int_{A_{\lambda R,\lambda r}(\tilde x)} H(v(x,0)) dx,
\end{multline}
 and set
 \begin{eqnarray}
\widehat{\mathcal{M}}_{\lambda, \tilde x}&=&\Big\{v \in 
H^1_{0,L}(\mathcal{C}_{A_{\lambda
R,\lambda r}(\tilde x)},y^{1-2\alpha})\setminus
\{0\}; \, \widehat{I}'_{\lambda,\tilde x}(v)[v]=0 \Big\} \label{chap2}\\
a(R,r,\lambda,\tilde x)&=&\inf\Big\{\widehat{I}_{\lambda,\tilde x}(v):
v\in\widehat{{\cal{M}}}_{\lambda,\tilde x}\;\;\mbox{and}\;\;
\beta(v)=\tilde x\Big\}. \label{chap3}
 \end{eqnarray}
As is customary,  when $\tilde x=0$ we simply write $\widehat{I}_{\lambda}$, $\widehat{\mathcal{M}}_{\lambda }$ and
$a(R,r,\lambda)$.
We observe that the value $a(R,r,\lambda,\tilde x)$ does not depend on the ``center'' $\tilde x$.

Since $\widehat I_{\lambda,\tilde x}$ has the Mountain Pass Geometry, is bounded from
below on $\widehat{{\cal{M}}}_{\lambda,\tilde x}$ and satisfies the Palais-Smale condition,
the infima $a(R,r,\lambda,\tilde x)$ are obtained.

In the following we use a version of a maximum principle to the operator $(-\Delta)^\alpha$. 
Since we were not able to find in the literature the exact version of it which is necessary here, we prove it in the following result.

\begin{lemma}\label{maximumprinciple}
Let $\Gamma \subset \mathbb{R}^N$ be a smooth domain 
and  $v \in H^1_{0,L}(\mathcal{C}_{\Gamma},y^{1-2\alpha})$   such that 
\begin{equation}
\left\{
\begin{array}{ll}
-\mbox{div} (y^{1-2\alpha}\nabla v) = 0 & \mbox{in $\mathcal{C}_{\Gamma}$}\\
v = 0 & \mbox{on $\partial_L \mathcal{C}_{\Gamma}$}\\
\displaystyle \frac{1}{k_\alpha}\frac{\partial v}{\partial y^\alpha}(x,0) + v(x,0) = f(x) & \mbox{on $\Gamma$}.
\end{array}\right.
\label{Plinear}
\end{equation}
in the weak sense. 
If $f \geq 0$, then $v \geq 0$ in $\mathcal{C}_{\Gamma}$.
\end{lemma}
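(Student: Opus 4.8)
The statement is a maximum principle for the extension problem: if $v$ weakly solves \eqref{Plinear} with $f \geq 0$, then $v \geq 0$. The natural approach is to test the weak formulation against the negative part $v^-(x,y) := \max\{-v(x,y),0\}$ and show $\|v^-\|_\alpha = 0$. First I would record that $v \in H^1_{0,L}(\mathcal{C}_\Gamma, y^{1-2\alpha})$ implies $v^- \in H^1_{0,L}(\mathcal{C}_\Gamma, y^{1-2\alpha})$ as well (the positive/negative part operations are bounded on the weighted Sobolev space, and since $v$ vanishes on $\partial_L \mathcal{C}_\Gamma$ so does $v^-$), so $v^-$ is an admissible test function. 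Since $v^+$ and $v^-$ have disjoint supports (up to the set $\{v=0\}$, where the gradients of both parts vanish a.e.), we have $\nabla v \cdot \nabla v^- = -|\nabla v^-|^2$ a.e. in $\mathcal{C}_\Gamma$ with the weight $y^{1-2\alpha}$, and likewise $v(x,0)\, v^-(x,0) = -|v^-(x,0)|^2$ on $\Gamma$.

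\medskip

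Next I would plug $\psi = v^-$ into the weak form of \eqref{Plinear}, namely
\[
\int_{\mathcal{C}_\Gamma} k_\alpha^{-1} y^{1-2\alpha} \nabla v \cdot \nabla v^-\, dx\,dy + \int_\Gamma v(x,0)\, v^-(x,0)\, dx = \int_\Gamma f(x)\, v^-(x,0)\, dx.
\]
By the sign computations above, the left-hand side equals $-\|v^-\|_\alpha^2$ (up to the harmless $k_\alpha^{-1}$ normalization, which is exactly the one appearing in $\|\cdot\|_\alpha$), while the right-hand side is $\int_\Gamma f(x)\, v^-(x,0)\, dx \geq 0$ since $f \geq 0$ and $v^-(x,0) \geq 0$. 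Hence $-\|v^-\|_\alpha^2 \geq 0$, forcing $v^- \equiv 0$, i.e. $v \geq 0$ in $\mathcal{C}_\Gamma$.

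\medskip

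**The main obstacle.** The only genuinely delicate point is the regularity/admissibility of $v^-$ as a test function in this degenerate-weighted setting: one must be sure that $v \in H^1(\mathcal{C}_\Gamma, y^{1-2\alpha})$ (with the Muckenhoupt $A_2$ weight $y^{1-2\alpha}$, since $2\alpha-1 \in (-1,1)$) implies $v^- \in H^1(\mathcal{C}_\Gamma, y^{1-2\alpha})$ with $\nabla v^- = -\chi_{\{v<0\}}\nabla v$, and that the trace of $v^-$ is $(v(\cdot,0))^-$. This is the weighted analogue of the classical chain-rule lemma for Sobolev functions and holds because the weight is $A_2$ (so the space behaves like a standard Sobolev space for truncation purposes) and the trace operator into $\mathcal{V}_0^\alpha(\Gamma)$ is continuous as recalled from \cite{Sire}; I would cite or sketch this and then the rest is the one-line testing argument above. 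An alternative, if one wants to avoid even this, is to use the spectral/series representation, but the test-function argument is cleaner and self-contained.
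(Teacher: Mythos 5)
Your proposal is correct and is essentially the paper's own proof: the authors likewise test the weak formulation with the negative part of $v$, use $f\geq 0$ to get a sign on the right-hand side, and conclude $\|v^-\|_\alpha=0$ (the only cosmetic difference being that they write $v=v^++v^-$ with $v^-\leq 0$, so their signs are flipped relative to yours). Your extra remarks on the admissibility of $v^-$ as a test function in the $A_2$-weighted space are a welcome refinement that the paper takes for granted.
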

\begin{proof}
Since $v$ satisfies (\ref{Plinear}), it follows that for all $\psi \in H^1_{0,L}(\mathcal{C}_{\Gamma},y^{1-2\alpha})$ such that 
$\psi \geq 0$ in $\partial_L \mathcal{C}_{\Gamma}$, we have
$$
k_\alpha^{-1}\int_{\mathcal{C}_{\Gamma}} y^{1-2\alpha}\nabla v \nabla \psi dxdy + \int_{\Gamma} v(x,0) \psi(x,0) dx= \int_{\Gamma} f(x)\psi(x,0)dx.
$$
If we take $v^-$ (where $v = v^+ + v^-$) as a test function in the last expression we get
$$
k_\alpha^{-1}\int_{\mathcal{C}_{\Gamma}} y^{1-2\alpha}|\nabla v^-|^2 dxdy + \int_{\Gamma} |v^-(x,0)|^2 dx = \int_{\Gamma} f(x)v^-dx \leq 0.
$$
But this implies that $v^- \equiv 0$ and then $v \geq 0$.
\end{proof}

The next result will be useful in future estimates with the barycenter map.
\begin{proposition}\label{4.1}
The number $a(R,r,\lambda)$ satisfies
$$
\displaystyle\liminf_{\lambda\rightarrow \infty}a(R,r,\lambda) >
c(\mathbb R^N).
$$
\end{proposition}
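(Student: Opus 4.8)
The plan is to argue by contradiction and exploit the fact that a minimizing configuration for $a(R,r,\lambda)$ must be ``spread out'' over an annulus whose inner and outer radii both grow like $\lambda$, so it cannot concentrate at a single point; this forces its energy strictly above the ground state level $c(\mathbb{R}^N)$. Suppose, for contradiction, that along some sequence $\lambda_n\to\infty$ one has $a(R,r,\lambda_n)\to \ell\le c(\mathbb{R}^N)$. For each $n$ pick $v_n\in\widehat{\mathcal M}_{\lambda_n}$ with $\beta(v_n)=0$ and $\widehat I_{\lambda_n}(v_n)=a(R,r,\lambda_n)$ (the infimum is attained by the remark preceding the statement). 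Since $\widehat I_{\lambda_n}(v_n)$ is bounded, the now-standard computation with \eqref{H_{3}} (exactly as in Lemma \ref{lema_PS_unconstrained}) gives $\|v_n\|^2\le C$ uniformly in $n$, and moreover the constraint $\widehat I_{\lambda_n}'(v_n)[v_n]=0$ together with \eqref{H_{1}}--\eqref{H_{2}} yields a uniform lower bound $\|v_n\|\ge \delta>0$, hence $\|tr\, v_n\|_{L^q(A_n)}\ge \delta'>0$ for some $q\in(2,2^*_\alpha)$, where $A_n:=A_{\lambda_n R,\lambda_n r}(0)$.

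Next I would run a concentration--compactness / Lions-type argument on the traces. Extending $v_n$ by zero to all of $\mathcal C_{\mathbb R^N}$, the sequence $\{v_n\}$ is bounded in $H^1(\mathcal C_{\mathbb R^N},y^{1-2\alpha})$ and does not vanish in the Lions sense (because of the $L^q$ lower bound and \cite[Lemma 2.2]{Felmer}), so there are $R_0,\gamma>0$ and points $z_n\in\mathbb{R}^N$ with $\int_{B_{R_0}(z_n)}|tr\, v_n|^2\,dx\ge\gamma$. Since $tr\, v_n$ is supported in the annulus $A_n$, we must have $z_n\in \overline{A_n}$ (up to enlarging $R_0$), so in particular $|z_n|\ge \lambda_n r - R_0 \to\infty$. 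Translating, $\overline v_n(x,y):=v_n(x+z_n,y)$ is a bounded non-vanishing sequence; a subsequence converges weakly to some $\overline v\ne 0$. The key point is that the translated domains $A_n - z_n$ exhaust $\mathbb{R}^N$: indeed any fixed ball $B_\rho(0)$ lies inside $A_n-z_n$ for $n$ large, because $z_n$ sits in an annulus both of whose radii tend to infinity while the annulus has ``thickness'' $(R-r)\lambda_n\to\infty$ — so locally the constraint of being supported in $A_n$ disappears in the limit and $\overline v$ lies in $H^1(\mathcal C_{\mathbb R^N},y^{1-2\alpha})$.

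Now I would compare energies. By weak lower semicontinuity and Fatou (using \eqref{H_{3}}, as in the first case of Lemma \ref{gio16}), the shifted limit $\overline v$ satisfies $I_\infty'(\overline v)=0$ after a standard projection onto $\mathcal M_\infty$; more precisely, the natural Nehari projection $t_n\overline v_n$ has $I_\infty(t_n\overline v_n)\le \widehat I_{\lambda_n}(v_n)+o(1)$ since localizing to the annulus only removes mass, and passing to the limit one gets a nonzero critical point $\overline v$ of $I_\infty$ with $I_\infty(\overline v)\le \ell\le c(\mathbb R^N)$, hence $I_\infty(\overline v)=c(\mathbb R^N)$ and $\overline v_n\to\overline v$ strongly. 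But strong convergence forces $\overline v$ to have essentially all the $L^2$-mass of $v_n$, which means the original (unshifted) $v_n$ has its mass concentrating near $z_n$ with $|z_n|\sim \lambda_n r\to\infty$, and therefore $\beta(v_n)=\beta(tr\, v_n(\cdot,0)^+)$ is close to $z_n$ and in particular $|\beta(v_n)|\to\infty$. This contradicts $\beta(v_n)=0$. The inequality must therefore be strict: $\liminf_{\lambda\to\infty} a(R,r,\lambda)>c(\mathbb R^N)$.

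\textbf{Main obstacle.} The delicate step is the last one: turning the strong-convergence/concentration statement into a contradiction with $\beta(v_n)=0$. One has to be careful that $\overline v$ captures a definite fraction of the $L^2$-mass and that this fraction is located at distance $\to\infty$ from the origin, while controlling the leftover (vanishing) mass so that the barycenter is genuinely pulled away from $0$. Handling the possibility of multiple concentration profiles (a full Lions decomposition) and ruling out a dichotomy that would split energy below $c(\mathbb R^N)$ — impossible since $c(\mathbb R^N)$ is the ground-state level and energy is superadditive on the Nehari manifold — is the technical heart of the argument.
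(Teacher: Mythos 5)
Your overall strategy is the same as the paper's: argue by contradiction along $\lambda_n\to\infty$, take minimizers $v_n$ with $\beta(v_n)=0$, extend them by zero to $\mathcal C_{\mathbb R^N}$ so that they become a minimizing sequence on $\mathcal M_\infty$ with $I_\infty(v_n)\to c(\mathbb R^N)$, deduce that the mass runs off to infinity at distance of order $\lambda_n r$, and derive a contradiction with the barycenter constraint. The final step that you flag as ``the technical heart'' is closed in the paper exactly as you suspect: one takes the concentration ball of radius $r\lambda_n/2$ (growing, so that it captures the full mass $M=\int|\mathfrak w_\infty(\cdot,0)|^2$, not just a fixed fraction), notes that on this ball $x^1\leq -r\lambda_n/2$ while on the rest of the annulus $x^1\leq R\lambda_n$ and the residual mass is $o(1)$, and the identity $\int x^1|v_n(x,0)|^2dx=0$ then forces the residual mass to be at least $\frac{rM}{2R}-o(1)$, a contradiction. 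No Lions dichotomy needs to be excluded separately, since Lemma \ref{gio16} already packages the alternative ``strong convergence or strong convergence after translation'' for minimizing sequences on $\mathcal M_\infty$.

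The one step in your sketch that would fail as written is the claim that every fixed ball $B_\rho(0)$ lies in $A_n-z_n$ for large $n$. The concentration point $z_n$ could sit at bounded distance from $\partial A_{\lambda_nR,\lambda_nr}$ (say, one unit from the inner sphere), in which case $A_n-z_n$ converges locally to a half-space, not to $\mathbb R^N$, and your limit $\overline v$ would a priori be a critical point of a half-space problem rather than of $I_\infty$; the thickness $(R-r)\lambda_n\to\infty$ does not prevent this. The paper avoids the issue entirely by applying Lemma \ref{gio16} to the zero-extensions viewed as elements of $\mathcal M_\infty$: the Ekeland/Palais--Smale machinery there takes place in the whole space $H^1(\mathcal C_{\mathbb R^N},y^{1-2\alpha})$, so the support constraint plays no role and the translated limit is automatically a whole-space ground state $\mathfrak w_\infty$. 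If you want to keep your hands-on route, you would need an extra argument ruling out the half-space profile (or simply choose $z_n$ as in Lemma \ref{gio16} rather than from the Lions lemma). A minor additional remark: since the extension of $v_n$ already lies on $\mathcal M_\infty$ with $I_\infty(v_n)=\widehat I_{\lambda_n}(v_n)$, no Nehari reprojection $t_n\overline v_n$ is needed, and the preliminary observation $a(R,r,\lambda)>c(\mathbb R^N)$ lets you take $\ell=c(\mathbb R^N)$ from the start.
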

\begin{proof} From the definition of $a(R,r,\lambda)$ and $c(\mathbb R^N)$, we get
$$
a(R,r,\lambda) > c(\mathbb R^N).
$$
Suppose by contradiction that there exist $\lambda_{n}\rightarrow \infty$ 
such that $a(R,r,\lambda_{n})\rightarrow c(\mathbb R^N)$.
Since $a(R,r,\lambda_{n})$ is reached there exist
$v_{n}\in \widehat{\mathcal{M}}_{\lambda_{n}}$ such that 
$$
\beta(v_{n})=0 \quad \mbox{and}  \quad
\widehat{I}_{\lambda}(v_n) = a(R,r,\lambda_{n})\rightarrow c(\mathbb R^N).
$$

Since $h \geq 0$, by \eqref{H_{0}} and  Lemma \ref{maximumprinciple} it is  $v_n \geq 0$
for all $n \in \mathbb{N}$. Moreover, since $v_n=0$ on 
 $\partial_{L} \mathcal C_{A_{\lambda_n R, \lambda_n r}}$, 
by considering the trivial extension 
on $\mathcal C_{\mathbb R^{N}}\setminus \mathcal C_{A_{\lambda_n R, \lambda_n r}}$
(which we denote with the same symbol) we obtain a function in 
$H^{1}_{0,L}(\mathcal C_{\mathbb R^{N}}, y^{1-2\alpha})$. Consequently,
$$
v_n \rightharpoonup 0 \,\,\, \mbox{in} \,\,\, H^1(\mathcal{C}_{\mathbb R^{N}},y^{1-2\alpha}), \,\, I_{\infty}(v_n)=
a(R,r,\lambda_{n}) \rightarrow c(\mathbb R^N) \,\,\, \mbox{and} \,\,\, v_n \in \mathcal{M}_{\infty}.
$$
Recalling that $c(\mathbb R^N)>0$, we have that $\{v_{n}\}$ is not strongly convergent. From Lemma  \ref{gio16}, we get
(recall $z=(x,y)$)
$$
v_{n}(z)=w_{n}(z+z_n)+\mathfrak w_{\infty}(z+z_{n})
$$
where $\{w_{n}\} \subset H^1(\mathcal{C}_{\mathbb R^{N}},y^{1-2\alpha})$ is a sequence converging strongly to $0$,
$\{z_{n}\}=\{(x_{n},0)\}\subset \mathbb{R}^{N+1}$ is such that $|x_{n}|\rightarrow \infty$ and 
$\mathfrak w_{\infty}\in H^1(\mathcal {C}_{\mathbb R^{N}},y^{1-2\alpha})$ is a positive function verifying
$$
I_{\infty}(\mathfrak w_{\infty})=c(\mathbb R^N) \quad \mbox{and} \quad
I'_{\infty}(\mathfrak w_{\infty})=0.
$$
Since $I_{\infty}$ is rotationally invariant on functions of type $w(\cdot, 0),$
 we can assume that
$$
z_{n}=(x^{1}_{n}, 0,0,\ldots,0)\ \ \text{and } \ \ x^{1}_{n}<0.
$$
Now we set
$$
M=\int_{\mathbb{R}^N}|\mathfrak w_{\infty}(x,0)|^{2} dx>0.
$$
Since $\|w_{n}\|_{\alpha}\rightarrow 0$, it follows that 
$$
\int_{B_{r\lambda_{n}/2}(x_{n})}|
w_{n}(x+x_{n},0)+\mathfrak w_{\infty}(x + x_{n},0)|^{2} dx\rightarrow M,
$$
from which we obtain
$$
\int_{\Theta_{n}}| v_{n}(x,0)|^{2} dx\rightarrow M, \ \ \text{ where }\ \ \Theta_{n}=B_{r\lambda_{n}/2}(x_{n})\cap A_{\lambda_{n} R, \lambda_{n} r} 
$$
 and hence
\begin{eqnarray}\label{1contra}
\int_{\Upsilon_{n}}|v_{n}(x,0)|^{2}dx\rightarrow 0, \ \ \text{ where }\ \ \Upsilon_{n}= A_{\lambda_{n} R, \lambda_{n} r
 \setminus B_{\lambda_{n}r/2}(x_{n}).}
\end{eqnarray}
Since
$\beta(v_{n})=0$, we get
$$
0=\int_{A_{\lambda_{n}R,\lambda_{n}r}}x^{1}|
v_{n}(x,0)|^2 dx=\int_{\Theta_{n}}x^{1}|
v_{n}(x,0)|^2 dx+\int_{\Upsilon_{n}}x^{1}| v_{n}(x,0)|^2 dx.
$$
Thus,
$$
-\frac{r\lambda_{n}}{2}(M+o_{n}(1))+R\lambda_{n}\int_{\Upsilon_{n}}| v_{n}(x,0)|^{2} dx\geq 0
$$
with $ o_{n}(1)\rightarrow 0$. Then,
$$
\int_{\Upsilon_{n}}| v_{n}(x,0)|^2 dx \geq \frac{rM}{2R}-o_{n}(1)
$$
which contradicts (\ref{1contra}).
\end{proof}

\medskip

The other auxiliary functional we need is 
$I_{B_{\xi}}:H^1_{0,L}(\mathcal{C}_{B_{\xi}},y^{1-2\alpha})\rightarrow \mathbb{R}$, where $\xi>0$, given by
\begin{equation}\label{see}
I_{B_{\xi}}(v)=\frac{k_{\alpha}^{-1}}{2}\int_{\mathcal{C}_{B_{\xi}}} y^{1-2\alpha}|\nabla v|^{2} dxdy +
\frac{1}{2}\int_{B_{\xi}}|v(x,0)|^2 dx\   - \int_{B_{\xi}}
H(v(x,0)) dx.
\end{equation}
This functional has a Mountain Pass Geometry  and we denote with $c(B_{\xi})$  the mountain pass level.
If 
$$
\mathcal{M}_{B_{\xi}}=
\Big\{v \in H^1_{0,L}(\mathcal{C}_{B_{\xi}},y^{1-2\alpha})\backslash \{0\}: I'_{B_{\xi}}(v)[v]=0 \Big\}
$$
denotes the Nehari manifold associated to $I_{B_{\xi}}$, then, as usual,
\begin{equation}\label{blambda}
c(B_{\xi})= \inf_{v\in \mathcal{M}_{B_{\xi}}}I_{B_{\xi}}(v).
\end{equation}
Arguing as in  Theorem \ref{existence} and using Schwartz symmetrization techniques, we get
\begin{proposition}\label{4.4}
The functional $I_{B_{\xi}}$ defined in \eqref{see} satisfies the (PS) condition on $\mathcal M_{B_{\xi}}$.
In particular there exists a ground state solution
$ \mathfrak  w_{B_{\xi}}\in \mathcal M_{B_{\xi}} $ and 
$\mathfrak w_{B_{\xi}}(\cdot, 0)$ is radially symmetric
with respect to the origin.
\end{proposition}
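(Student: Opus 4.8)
The plan is to argue along the lines of Theorem \ref{existence}, adding at the end a Schwartz symmetrization argument to obtain the radial symmetry of the trace. \emph{First I would prove the $(PS)$ condition} exactly as in Lemma \ref{lema_PS_unconstrained}, since $B_\xi$ is again a bounded smooth domain: if $\{v_n\}\subset H^1_{0,L}(\mathcal C_{B_\xi},y^{1-2\alpha})$ is a $(PS)_c$ sequence for $I_{B_\xi}$, then \eqref{H_{3}} forces it to be bounded; passing to a subsequence $v_n\rightharpoonup v$, and using the Trace Theorem together with the \emph{compactness} of the embedding of the fractional Sobolev space on the bounded set $B_\xi$ into $L^s(B_\xi)$, $2\le s<2^*_\alpha$, one gets $tr_{B_\xi}v_n\to tr_{B_\xi}v$ in $L^s(B_\xi)$; the subcritical growth \eqref{H_{1}}--\eqref{H_{2}} then upgrades this to strong convergence $v_n\to v$. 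By Remark \ref{rem} the $(PS)$ condition then also holds on $\mathcal M_{B_\xi}$.

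\emph{Next I would produce a positive ground state.} Just as for $I_\lambda$ (Lemma \ref{Nehariman} and Remark \ref{rem}), $I_{B_\xi}$ is bounded below on the complete $C^1$ manifold $\mathcal M_{B_\xi}$ and $c(B_\xi)=\inf_{\mathcal M_{B_\xi}}I_{B_\xi}=\inf_{v\ne 0}\sup_{t>0}I_{B_\xi}(tv)>0$; applying Ekeland's Variational Principle on $\mathcal M_{B_\xi}$ and then the previous step, a minimizing $(PS)_{c(B_\xi)}$ sequence converges (up to a subsequence) to some $w\in\mathcal M_{B_\xi}$ with $I_{B_\xi}(w)=c(B_\xi)$. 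Since $\mathcal M_{B_\xi}$ is a natural constraint (the analogue of Corollary \ref{17}), $w$ is a critical point of the free functional, so $-\mathrm{div}(y^{1-2\alpha}\nabla w)=0$ in $\mathcal C_{B_\xi}$ and $\frac{1}{k_\alpha}\frac{\partial w}{\partial y^\alpha}(\cdot,0)+w(\cdot,0)=h(w(\cdot,0))$. Testing with $w^-$ and using that $h\equiv 0$ on $(-\infty,0]$ by \eqref{H_{0}} (so $\int_{B_\xi}h(w(x,0))w^-(x,0)\,dx=0$) gives $k_\alpha^{-1}\int_{\mathcal C_{B_\xi}}y^{1-2\alpha}|\nabla w^-|^2\,dxdy+\int_{B_\xi}|w^-(x,0)|^2\,dx=0$, hence $w\ge 0$, and the strong maximum principle for $-\mathrm{div}(y^{1-2\alpha}\nabla\cdot)$ yields $w>0$ in $\mathcal C_{B_\xi}$, as in Lemma \ref{maximumprinciple}.

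\emph{Finally, the radial symmetry.} Set $u=tr_{B_\xi}w\ge 0$ and, for each $y\ge 0$, let $w^{\#}(\cdot,y)$ be the Schwartz symmetrization of $w(\cdot,y)\in H^1_0(B_\xi)$. Since $B_\xi$ is a ball centered at $0$ we have $B_\xi^{\#}=B_\xi$, so $w^{\#}$ still vanishes on $\partial_L\mathcal C_{B_\xi}$, lies in $H^1_{0,L}(\mathcal C_{B_\xi},y^{1-2\alpha})$, and $tr_{B_\xi}w^{\#}=u^{\#}$ is radially symmetric about the origin. The slicewise P\'olya--Szeg\H{o} inequality gives $\int_{B_\xi}|\nabla_x w^{\#}(\cdot,y)|^2dx\le\int_{B_\xi}|\nabla_x w(\cdot,y)|^2dx$, and the $L^2$-contractivity of the symmetrization, applied to the $y$-difference quotients, gives $\|\partial_y w^{\#}(\cdot,y)\|_{L^2(B_\xi)}\le\|\partial_y w(\cdot,y)\|_{L^2(B_\xi)}$, for a.e. $y>0$; since the weight $y^{1-2\alpha}$ depends only on $y$, integration yields $k_\alpha^{-1}\int_{\mathcal C_{B_\xi}}y^{1-2\alpha}|\nabla w^{\#}|^2dxdy\le k_\alpha^{-1}\int_{\mathcal C_{B_\xi}}y^{1-2\alpha}|\nabla w|^2dxdy$. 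Combined with $\|u^{\#}\|_{L^2(B_\xi)}=\|u\|_{L^2(B_\xi)}$ and $\int_{B_\xi}H(tu^{\#})dx=\int_{B_\xi}H(tu)dx$ for every $t>0$ (equimeasurability, $u\ge 0$, $H\ge 0$), this gives $I_{B_\xi}(tw^{\#})\le I_{B_\xi}(tw)$ for all $t>0$, hence $\sup_{t>0}I_{B_\xi}(tw^{\#})\le\sup_{t>0}I_{B_\xi}(tw)=I_{B_\xi}(w)=c(B_\xi)$. Projecting $w^{\#}$ onto $\mathcal M_{B_\xi}$ via \eqref{mappanehari}, the function $\mathfrak w_{B_\xi}:=t_{w^{\#}}w^{\#}\in\mathcal M_{B_\xi}$ satisfies $c(B_\xi)\le I_{B_\xi}(\mathfrak w_{B_\xi})=\sup_{t>0}I_{B_\xi}(tw^{\#})\le c(B_\xi)$, hence is a ground state with radially symmetric trace $t_{w^{\#}}u^{\#}$ (and, by the second step applied to $\mathfrak w_{B_\xi}$, a positive solution).

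I expect the only real obstacle to be the slicewise symmetrization in the last step: one must check that $w^{\#}$ genuinely belongs to the weighted space $H^1_{0,L}(\mathcal C_{B_\xi},y^{1-2\alpha})$ and that symmetrization does not increase the weighted Dirichlet energy — especially the control of $\partial_y w^{\#}$ — which relies on the (standard but somewhat delicate) theory of rearrangements of Sobolev functions; the first two steps are routine adaptations of results already established in the paper.
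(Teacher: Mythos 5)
Your proposal is correct and follows exactly the route the paper indicates: the paper gives no written proof of Proposition \ref{4.4}, only the remark that one argues as in Theorem \ref{existence} and uses Schwartz symmetrization, and your three steps (the $(PS)$ argument as in Lemma \ref{lema_PS_unconstrained}, minimization on the Nehari manifold, and the slicewise rearrangement in $x$ with the P\'olya--Szeg\H{o} inequality plus $L^2$-nonexpansivity for the $y$-derivative, exploiting that the weight $y^{1-2\alpha}$ is constant on each slice) are precisely the standard way to carry that out. The only detail worth recording explicitly is that $tr_{B_\xi}w^{\#}=(tr_{B_\xi}w)^{\#}$, which follows from the $L^2$-continuity of the rearrangement applied to $w(\cdot,y)\to w(\cdot,0)$ as $y\to 0^+$.
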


%
%
%
\begin{proposition}\label{4.2}
The numbers $c(\Omega_{\lambda})$ and $c(B_{\xi})$, defined respectively in \eqref{clambda} and  \eqref{blambda},  verify the limits
$$
\lim_{\lambda
\rightarrow\infty}c(\Omega_{\lambda})=c(\mathbb R^N)\quad\mbox{and}\quad
\lim_{\xi \rightarrow\infty}c(B_{\xi})=c(\mathbb R^N).
$$
\end{proposition}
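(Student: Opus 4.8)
The plan is to establish the two limits separately, although the arguments are almost identical; the domain $B_\xi$ is just a special case of $\Omega_\lambda$ (with $\Omega$ a ball), so I will concentrate on $\lim_{\lambda\to\infty}c(\Omega_\lambda)=c(\mathbb R^N)$ and indicate the only modification needed for the balls. The proof splits into two inequalities: $\limsup_{\lambda\to\infty}c(\Omega_\lambda)\leq c(\mathbb R^N)$ and $\liminf_{\lambda\to\infty}c(\Omega_\lambda)\geq c(\mathbb R^N)$.

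For the upper bound I would use the ground state $\mathfrak w_\infty$ of $I_\infty$ provided by Lemma \ref{gio16}, together with a cut-off argument. Since $0\in\Omega$, for $\lambda$ large the rescaled domain $\Omega_\lambda$ contains any fixed ball $B_\rho$. Fix a cut-off function $\eta_\rho\in C_0^\infty(\mathcal C_{\mathbb R^N})$, equal to $1$ on $\mathcal C_{B_\rho}$, vanishing outside $\mathcal C_{B_{2\rho}}$, and set $v_\rho=\eta_\rho\mathfrak w_\infty$, which (for $\lambda$ large) belongs to $H^1_{0,L}(\mathcal C_{\Omega_\lambda},y^{1-2\alpha})$ after trivial extension. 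Projecting $v_\rho$ onto $\mathcal M_\lambda$ via the map \eqref{mappanehari} and using $c(\Omega_\lambda)=\inf_{u\neq0}\sup_{t>0}I_\lambda(tu)$ from \eqref{clambda}, one gets $c(\Omega_\lambda)\leq\sup_{t>0}I_\lambda(tv_\rho)=\sup_{t>0}I_\infty(tv_\rho)$, where the last equality holds because $v_\rho$ is supported inside $\Omega_\lambda$ and $I_\lambda,I_\infty$ have the same integrand there. Since $v_\rho\to\mathfrak w_\infty$ in $H^1(\mathcal C_{\mathbb R^N},y^{1-2\alpha})$ as $\rho\to\infty$, continuity of $t\mapsto\sup_{t>0}I_\infty(t\cdot)$ on the Nehari projection gives $\sup_{t>0}I_\infty(tv_\rho)\to\sup_{t>0}I_\infty(t\mathfrak w_\infty)=c(\mathbb R^N)$. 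Choosing $\rho$ large first and then $\lambda$ large yields $\limsup_{\lambda\to\infty}c(\Omega_\lambda)\leq c(\mathbb R^N)$.

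For the lower bound, let $\mathfrak w_{\Omega_\lambda}\in\mathcal M_\lambda$ be the ground state from Theorem \ref{existence}, so $I_\lambda(\mathfrak w_{\Omega_\lambda})=c(\Omega_\lambda)$ and $I_\lambda'(\mathfrak w_{\Omega_\lambda})=0$. By Lemma \ref{maximumprinciple} we may take $\mathfrak w_{\Omega_\lambda}\geq 0$, and extending it trivially to all of $\mathcal C_{\mathbb R^N}$ (using that it vanishes on $\partial_L\mathcal C_{\Omega_\lambda}$) produces an element of $H^1(\mathcal C_{\mathbb R^N},y^{1-2\alpha})$ lying on $\mathcal M_\infty$, with $I_\infty$ of it equal to $c(\Omega_\lambda)$. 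Hence $c(\mathbb R^N)=\inf_{\mathcal M_\infty}I_\infty\leq c(\Omega_\lambda)$ for every $\lambda$, so trivially $\liminf_{\lambda\to\infty}c(\Omega_\lambda)\geq c(\mathbb R^N)$. Combining the two inequalities gives the first limit. For the second limit, $B_\xi=\xi B_1$ has the same form with $\Omega=B_1$, $\lambda=\xi$, so the identical argument applies verbatim (and Proposition \ref{4.4} even supplies the radial ground state directly).

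The one point that needs a little care — and is really the only obstacle — is the upper bound step: one must check that the Nehari projection $t\mapsto t_\lambda v_\rho$ behaves well, i.e. that $\sup_{t>0}I_\lambda(tv_\rho)$ is attained at a bounded $t_\lambda$ and depends continuously on the approximation, uniformly in $\lambda$ once the support of $v_\rho$ sits inside $\Omega_\lambda$. This follows from hypotheses \eqref{H_{3}}–\eqref{H_{4}}: \eqref{H_{4}} guarantees the map $t\mapsto I_\lambda(tv_\rho)$ has a unique maximum, and \eqref{H_{3}} gives the coercivity/decay that controls the location of that maximum in terms of $\|v_\rho\|$ and $\int h(v_\rho)v_\rho$, both of which converge as $\rho\to\infty$ and are unaffected by $\lambda$ because everything is supported in a fixed ball. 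No genuinely new estimate beyond what is already in Lemma \ref{Nehariman} and Remark \ref{rem} is required.
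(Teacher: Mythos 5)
Your proposal is correct and follows essentially the same route as the paper: the upper bound via a cut-off of $\mathfrak w_{\infty}$ supported in a large ball contained in $\Omega_{\lambda}$, projected onto $\mathcal M_{\lambda}$, with the key technical point being the convergence of the projection constants (the paper's claim $t_{R}\to 1$, proved there by ruling out $t_{R}\to\infty$ via \eqref{H_{4}} and $t_{R}\to 0$ via \eqref{H_{1}}--\eqref{H_{2}}); and the lower bound by trivial extension, giving $c(\Omega_{\lambda})\geq c(\mathbb R^{N})$ directly. The reduction of the $B_{\xi}$ case to the same argument also matches the paper.
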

\begin{proof}
Here we will just prove the first limit, since the second one
follows from the same kind of arguments. Let $\Phi$ be a function
in $C^{\infty}(\mathcal{C}_{\mathbb R^{N}}; [0,1])$ such that 
$$
\Phi(x,y)=
\begin{cases}
1 & \mbox{ if }(x,y) \in \mathcal{C}_{B_{1}}\\
0 & \mbox{ if }(x,y) \in  \mathcal{C}_{\mathbb R^{N} \setminus B_{2}}. 
\end{cases}
$$
 For each $R>0$, let us
consider the rescaled function $\Phi_{R}(x,y)=\Phi(x/R,y)$ and set
$w_{R}(x,y)=\Phi_{R}(x,y)\mathfrak w_{\infty}(x,y)$, where $\mathfrak w_{\infty}$ is the ground state of the limit problem
given in Lemma \ref{gio16}, hence
$I_\infty(\mathfrak w_{\infty})=c(\mathbb R^N)$ and $I_\infty'(\mathfrak w_{\infty}) = 0$. 
Observe that 
\begin{equation}\label{contra}
w_{R}\to \mathfrak w_{\infty}\ \ \text{ in }\ \ H^{1}_{0,L}(\mathcal C_{\mathbb R^{N}}, y^{1-2\alpha})\ \text{ as } \ R\to +\infty.
\end{equation}
Since $0\in\Omega_{\lambda}$, there exists
$\bar\lambda>0$ such that $B_{2R}\subset \Omega_{\lambda}$ for
$\lambda\geq \bar\lambda$. Let $t_{R}>0$ such that
$$
I_{\lambda}(t_{R}w_{R})=\displaystyle\max_{t\geq 0}I_{\lambda}(t w_{R})=\displaystyle\max_{t\geq 0}I_{\infty}(t w_{R}).
$$
Thus $ I'_{\lambda}(t_{R}w_{R})[t_Rw_{R}]
= 0$, i.e. $t_{R}w_{R}\in \mathcal{M}_{\lambda}$.
Then
$$
c(\Omega_{\lambda})\leq I_{\lambda}(t_{R}w_{R})=I_{\infty}(t_{R}w_{R}) \;\
\mbox{for all} \;\ \lambda\geq \bar \lambda.
$$
Since $R$ is independent on $\lambda$, so is  $t_{R}$.
Hence, by taking the limit when $\lambda \rightarrow \infty$, we obtain
\begin{equation}\label{recalling}
 \displaystyle\limsup_{\lambda\rightarrow \infty}c(\Omega_{\lambda})\leq I_{\infty}(t_{R}w_{R}).
\end{equation}
{\sl Claim:} we have 
$\lim_{R\rightarrow \infty}t_{R}=1.$

Since $t_{R} w_{R}\in \mathcal M_{\lambda},$ 
we get
\begin{eqnarray*}\label{bo}
\|w_{R}\|_{\alpha}^{2}&=&\kappa_{\alpha}^{-1}\int_{\mathcal{C}_{\mathbb R^{N}} }y^{1-2\alpha}|\nabla w_{R}|^{2} dxdy +\int_{\mathbb{R}^N} |w_{R}(x,0)|^{2} dx \\
& = &\int_{\mathbb{R}^N}h(t_{R}w_{R}(x,0))t_{R}^{-1}w_{R}(x,0) dx \\
&>& \int_{B_{1}}h(t_{R}m)t_{R}^{-1}m dx,
\end{eqnarray*}
where $m=\min_{|x|\leq 1}w_{R}(x,0) > 0$ by the Strong Maximum Principle (see \cite[Remark 4.2]{CabreSire}).
It follows that $\{t_{R}\}$ has to be bounded, otherwise  by 
\eqref{H_{4}}  we deduce $\|w_{R}\|_{\alpha}^{2}\to+\infty$, against \eqref{contra}.

Moreover, if there exists 
$R_{n}\rightarrow \infty$ with $t_{R_{n}}\rightarrow 0$, by \eqref{H_{1}} and \eqref{H_{2}}
\begin{eqnarray*}
\|w_{R_n}\|_{\mathcal{C}_{\mathbb R^{N}}}^2 & = & \int_{\mathbb{R}^N}h(t_{R_n}w_{R_n}(x,0))t_{R_n}^{-1}w_{R}(x,0) dx\\
& \leq & C_1 t_{R_{n}}\int_{\mathbb{R}^N}|w_{R_{n}}(x,0)|^{2} dx + C_2 t_{R_n}^{q-1}\int_{\mathbb{R}^N}|w_{R_{n}}(x,0)|^q dx\to 0
\end{eqnarray*}
which again  contradicts \eqref{contra}. This implies that  $t_{R}\nrightarrow 0$. 
Thus, we can assume that  $t_{R}\rightarrow t_{0}>0$ for $R\to +\infty$ and consequently
$$
\kappa_{\alpha}^{-1}\int_{\mathcal{C}_{\mathbb R^{N}} }y^{1-2\alpha}|\nabla \mathfrak w_{\infty}|^{2} dxdy +\int_{\mathbb{R}^N} |\mathfrak w_{\infty}(x,0)|^{2} dx 
=\int_{\mathbb{R}^N}h(t_{0}\mathfrak w_{\infty}(x,0))t_{0}^{-1}\mathfrak w_{\infty}(x,0) dx.
$$
Since $ \mathfrak w_{\infty} \in {\cal{M}}_{\infty}$, it has to be $t_{0}=1$, proving our claim.

\smallskip

 Then
$I_{\infty}(t_{R}w_{R})\rightarrow I_{\infty}(\mathfrak w_{\infty})=c(\mathbb R^N)$ as
$R\rightarrow \infty$ and  recalling \eqref{recalling},
\begin{equation}\label{lsup}
\displaystyle\limsup_{\lambda \rightarrow \infty}c(\Omega_{\lambda})\leq c(\mathbb R^N). 
\end{equation}
On the other hand, by the definition of $c(\Omega_{\lambda})$ and $c(\mathbb R^N)$, we get
$ c(\Omega_{\lambda})\geq c(\mathbb R^N) \ \mbox{for all} \ \lambda>0, $
which implies 
\begin{equation}\label{linf}
\liminf_{\lambda \rightarrow \infty}c(\Omega_{\lambda})\geq c(\mathbb R^N).
\end{equation}
The conclusion follows by \eqref{lsup} and \eqref{linf}.
\end{proof}

\medskip

Before to proceed,  we need to introduce other notations. Given $a\in (-\infty,+\infty]$, we set \medskip
\begin{itemize}
\item $I_{\lambda}^{a}:=\Big\{u\in H^{1}_{0,L}(\mathcal C_{\Omega_{\lambda}}, y^{1-2\alpha}): I_{\lambda}(u)\leq a\Big\}$,
 the $a-$sublevel of $I_{\lambda}$; \medskip
\item $\mathcal M_{\lambda}^{a}:= \mathcal M_{\lambda}\cap  I_{\lambda}^{a}$. \medskip
\end{itemize}
Moreover, from now on we fix a
real number $r>0$ such that the sets 
$$
\Omega_{\lambda}^{+}=\{x\in \mathbb{R}^N; d(x,{\Omega_{\lambda}})\leq r\}
$$
and
$$
\Omega_{\lambda}^{-}=\{x\in \Omega; d(x,\partial\Omega_{\lambda})\geq r\}
$$
are homotopically equivalent to $\overline \Omega_{\lambda}$ and  $B_{\lambda r}\subset \Omega_{\lambda}$,
so that $ \mathcal M_{\lambda}^{c(B_{\lambda r})}\neq \emptyset.$

The next proposition will be of primary importance in order to apply the ``barycenter method''.
\begin{proposition}\label{fundamental}
There exists ${\lambda}^{*}>0$ such that for all $\lambda\geq\lambda^{*}$,
$$v\in \mathcal M_{\lambda}^{c(B_{\lambda r})}\   \Longrightarrow \beta(v)\in\Omega_{\lambda}^{+}.$$
\end{proposition}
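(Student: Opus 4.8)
The plan is to argue by contradiction, exploiting the fact that the mountain pass level $c(B_{\lambda r})$ is strictly below the "annulus level" $a(R,r,\lambda)$ for large $\lambda$. Suppose the statement fails. Then there exist $\lambda_n \to \infty$ and $v_n \in \mathcal M_{\lambda_n}^{c(B_{\lambda_n r})}$ with $\beta(v_n) \notin \Omega_{\lambda_n}^+$. First I would replace $v_n$ by a positive function: since only $tr\, v_n^+$ enters the definition of $\beta$, and projecting $v_n^+$ onto $\mathcal M_{\lambda_n}$ does not increase $I_{\lambda_n}$ (by Lemma \ref{Nehariman}(3), $c(\Omega_{\lambda_n}) = \inf_{u\neq 0}\sup_{t>0} I_{\lambda_n}(tu)$ and $I_{\lambda_n}(tv^+) \leq I_{\lambda_n}(tv)$ since $H\geq 0$), we may assume $v_n \geq 0$. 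Extending trivially to $\mathcal C_{\mathbb R^N}$ (legitimate since $v_n = 0$ on $\partial_L \mathcal C_{\Omega_{\lambda_n}}$) we get $v_n \in \mathcal M_\infty$ with $I_\infty(v_n) = I_{\lambda_n}(v_n) \leq c(B_{\lambda_n r})$. By Proposition \ref{4.2}, $c(B_{\lambda_n r}) \to c(\mathbb R^N)$, and since $c(\mathbb R^N) = \inf_{\mathcal M_\infty} I_\infty$, the sequence $\{v_n\}$ is a minimizing sequence for $c(\mathbb R^N)$ on $\mathcal M_\infty$, hence a $(PS)_{c(\mathbb R^N)}$ sequence by Ekeland.

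Next I would apply Lemma \ref{gio16}: either $v_n$ converges strongly (case a), or after translation by $x_n$ with $|x_n|\to\infty$ it converges strongly (case b). In case (a), the limit $v$ is supported where? The crucial point is that $\mathrm{supp}\,(tr\, v_n) \subset \Omega_{\lambda_n}$; the barycenter $\beta(v_n)$ then lies in the convex hull of $\Omega_{\lambda_n}$, and one shows that if $\{v_n\}$ does not "escape", then $\beta(v_n)$ must lie in a neighborhood of $\Omega_{\lambda_n}$ — but this requires quantifying how concentration of the $L^2$ mass of $v_n(\cdot,0)$ forces the barycenter near the support. More precisely, in case (a) a strongly convergent (untranslated) sequence $v_n \to \mathfrak w_\infty$ would have $\beta(v_n) \to \beta(\mathfrak w_\infty)$, a fixed point, so $\beta(v_n)$ stays bounded; since $\Omega_{\lambda_n}$ expands, $\beta(v_n) \in \Omega_{\lambda_n}^+$ for large $n$ (as $0 \in \Omega_{\lambda_n}$ and $B_{\lambda_n r}\subset \Omega_{\lambda_n}$), a contradiction. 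In case (b), write $v_n(z) = w_n(z - z_n) + \mathfrak w_\infty(z - z_n) + $ (small), with $z_n = (x_n, 0)$, $|x_n|\to\infty$; the $L^2$-mass of $v_n(\cdot,0)$ concentrates around $x_n$, so $\beta(v_n) = x_n + o(|x_n|)$, in particular $|\beta(v_n)| \to \infty$ and $\beta(v_n)$ lies at distance $\asymp|x_n|$ from the origin. I then need to show $x_n$ cannot stay in (a neighborhood of) $\Omega_{\lambda_n}$: since $\mathrm{supp}(tr\,v_n)\subset \Omega_{\lambda_n}$ but the mass has escaped to infinity relative to the origin, and $\Omega_{\lambda_n} = \lambda_n \Omega$ is bounded by $\lambda_n \cdot \mathrm{diam}(\Omega)$, either $x_n$ exits $\Omega_{\lambda_n}^+$ — but then the mass of $v_n$ near $x_n$ would lie outside $\mathrm{supp}(v_n)$, contradiction — or $x_n$ stays within $\Omega_{\lambda_n}^+$, in which case $\beta(v_n) \in \Omega_{\lambda_n}^+$ for $n$ large, again contradicting the assumption $\beta(v_n)\notin\Omega_{\lambda_n}^+$.

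Actually, the cleanest route — and the one I would follow — uses Proposition \ref{4.1} to rule out the "bad" escaping behavior directly. If $\beta(v_n)\notin\Omega_{\lambda_n}^+$, then because $v_n$ is supported in $\Omega_{\lambda_n}$ and $\beta(v_n)$ is the $L^2$-barycenter of a function supported in $\Omega_{\lambda_n}$, one shows that a definite fraction of the mass of $v_n(\cdot, 0)$ must lie in an annular region $A_{\lambda_n R, \lambda_n r}(\tilde x_n)$ for suitable centers $\tilde x_n$ and radii $R > r$: intuitively, for the barycenter to land outside $\Omega_{\lambda_n}^+$, the mass must be "spread out" and in particular cannot be all near $0$, so by a covering/pigeonhole argument one can find an annulus (of the fixed proportions $R,r$, scaled by $\lambda_n$) capturing a fixed portion of the mass with barycenter equal to its center. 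Projecting this piece onto the appropriate Nehari manifold $\widehat{\mathcal M}_{\lambda_n, \tilde x_n}$ and using $I_{\lambda_n}(v_n) \le c(B_{\lambda_n r}) \to c(\mathbb R^N)$ gives $a(R,r,\lambda_n) \le c(\mathbb R^N) + o(1)$, contradicting $\liminf_{\lambda\to\infty} a(R,r,\lambda) > c(\mathbb R^N)$ from Proposition \ref{4.1}. The main obstacle is precisely this localization step: extracting, from the single hypothesis $\beta(v_n) \notin \Omega_{\lambda_n}^+$ together with $\mathrm{supp}(tr\,v_n)\subset\Omega_{\lambda_n}$, a genuine annular concentration of a fixed fraction of the $L^2$ mass with the right scaling and with barycenter at the annulus center — this is where the geometry of the domain $\Omega$ (fixed shape, dilated by $\lambda_n$) and the subadditivity of the energy on disjoint supports must be combined carefully, and where the fractional nature of the problem forces one to work with the extension $v_n$ on the cylinder rather than directly on traces.
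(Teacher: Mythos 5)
Your overall strategy --- argue by contradiction, use $I_{\lambda_n}(v_n)\le c(B_{\lambda_n r})\to c(\mathbb R^N)$ from Proposition \ref{4.2}, and derive a clash with Proposition \ref{4.1} --- is exactly the paper's, and the reduction to $v_n\ge 0$ is fine. But neither of your two routes is a complete proof. The first (via Lemma \ref{gio16}) breaks at the step ``$\beta(v_n)=x_n+o(|x_n|)$, hence $\beta(v_n)$ lies near $\Omega_{\lambda_n}$'': strong convergence of the translates controls the barycenter shift only up to a term of the form (small tail mass) times (distance of the tail from $x_n$), and since $\mathrm{diam}\,\Omega_{\lambda_n}\sim\lambda_n$ that term need not be $o(1)$, nor even bounded. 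This is precisely the difficulty that the quantity $a(R,r,\lambda)$ is designed to absorb (compare the end of the proof of Proposition \ref{4.1}, where the tail contribution is bounded below by $rM/2R$). Your second route correctly identifies Proposition \ref{4.1} as the right tool but leaves open the ``localization step'' --- producing an annulus that captures a fixed fraction of the mass and has barycenter at its center --- and you flag this yourself as the main obstacle; no such pigeonhole or covering argument is carried out, and in fact none is needed.

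The single observation that closes the proof, and which you are missing, is that the annulus should be centered at the barycenter itself; then the \emph{whole} function $v_n$ is an admissible competitor and nothing has to be extracted or projected. Since $\mathrm{supp}\,(tr_{\Omega_{\lambda_n}} v_n)\subset\Omega_{\lambda_n}$ and $x_n:=\beta(v_n)\notin\Omega_{\lambda_n}^{+}$, once $R$ is fixed larger than $\mathrm{diam}\,\Omega+r$ one has $\Omega_{\lambda_n}\subset A_{\lambda_n R,\lambda_n r}(x_n)$ (the inner exclusion $\overline{B_{\lambda_n r}(x_n)}\cap\Omega_{\lambda_n}=\emptyset$ is where the definition of $\Omega_{\lambda}^{+}$ must be read at the scale $\lambda r$, as the paper's proof in fact uses it). Hence $v_n$, extended by zero, lies in $H^1_{0,L}(\mathcal{C}_{A_{\lambda_n R,\lambda_n r}(x_n)},y^{1-2\alpha})$, belongs to $\widehat{\mathcal M}_{\lambda_n,x_n}$ because $\widehat I_{\lambda_n,x_n}$ and $I_{\lambda_n}$ coincide on functions supported over $\Omega_{\lambda_n}$, and satisfies the constraint $\beta(v_n)=x_n$ of \eqref{chap3}. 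Therefore $a(R,r,\lambda_n)=a(R,r,\lambda_n,x_n)\le \widehat I_{\lambda_n,x_n}(v_n)=I_{\lambda_n}(v_n)\le c(B_{\lambda_n r})$, and letting $n\to\infty$ with Proposition \ref{4.2} gives $\limsup_{n}a(R,r,\lambda_n)\le c(\mathbb R^N)$, contradicting Proposition \ref{4.1}. Lemma \ref{gio16} plays no role in this proposition.
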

\begin{proof}
Suppose that there exist $\lambda_{n}\rightarrow \infty$, $v_{n}\in
\mathcal{M}_{\lambda_{n}}^{c(B_{\lambda_{n} r})}$,
that we may assume positive, such that
$$
x_{n}=\beta(v^{+}_{n}(\cdot,0))\notin \Omega_{\lambda_{n}}^{+}.
$$
%
%
Fixing $R_{n}>\textrm{diam}(\Omega_{\lambda_{n}})$, we have that
$$
A_{\lambda_{n}R,\lambda_{n}r}(x_{n})\supset \Omega_{\lambda_{n}}
$$
and so, recalling \eqref{chap1}-\eqref{chap3},
\begin{equation}\label{4.6}
a(R,r,\lambda_{n})=a(R,r,\lambda_{n},x_{n})\leq I_{\lambda_{n}}(v_{n}) \leq c(B_{\lambda_{n}r}).
\end{equation}
Sending $n\rightarrow \infty$ in (\ref{4.6}) and using Proposition \ref{4.2}, it follows that \begin{eqnarray*}
\displaystyle\limsup_{n\rightarrow\infty}a(R,r,\lambda_{n})\leq c(\mathbb R^N)
\end{eqnarray*}
which contradicts Proposition \ref{4.1}.
\end{proof}

\medskip

 For $\lambda>0$, we define the injective operator
$\Psi_{\lambda, r}:\Omega_{\lambda}^{-}\rightarrow H^1_{0,L}(\mathcal{C}_{\Omega_{\lambda}},y^{1-2\alpha})$ 
given, for every $\tilde x\in \Omega_{\lambda}^{-}$ by
$$
[\Psi_{\lambda, r}(\tilde x)](x,y)=
\begin{cases}
t_{\lambda}\mathfrak w_{B_{\lambda r}}(|\tilde x-x|, y) & \ \mbox{for} \ (x,y)\in \mathcal C_{B_{\lambda r}(\tilde x)} \\
0& \ \mbox{for} \ (x,y)\in \mathcal C_{ \Omega_{\lambda}\setminus B_{\lambda r}(\tilde x)}
\end{cases}
$$
where $\mathfrak w_{B_{\lambda r}}$ is the ground state
solution given in Proposition \ref{4.4} and $t_{\lambda}>0$ is such that $\Psi_{\lambda, r}(\tilde x)\in \mathcal M_{\lambda}$, see \eqref{mappanehari}.
Note that for every $\tilde x\in \Omega_{\lambda}^{-}$, it holds
$$
\beta(\Psi_{\lambda, r}(\tilde x))=\beta([\Psi_{\lambda, r}(\tilde x)](\cdot,0))=\tilde x
$$
and since $$I_{\lambda}(\Psi_{\lambda, r}(\tilde x))=I_{B_{\lambda r}}(t_{\lambda}\mathfrak w_{B_{\lambda r}}(|\tilde x-\cdot|, \cdot))\leq
I_{B_{\lambda r}}( \mathfrak w_{B_{\lambda r}}(|\tilde x-\cdot|, \cdot))=
c(B_{\lambda r}),$$
we infer also
$$
\Psi_{\lambda, r}(\tilde x)\in \mathcal M_{\lambda}^{c(B_{\lambda r})}.
$$
%
Then we have
\begin{lemma}\label{homotopia} 
For $\lambda\geq  \lambda^{*}$ given in Proposition \ref{fundamental}, the composite map
$$\Omega_{\lambda}^{-} \stackrel{\Psi_{\lambda , r}}{\longrightarrow}
 \mathcal M_{\lambda}^{c(B_{\lambda r})} \stackrel{ \beta}{\longrightarrow}\Omega_{\lambda}^{+}$$
is well defined and coincide with the inclusion map of $\Omega_{\lambda}^{-}$ into $\Omega_{\lambda}^{+}$
\end{lemma}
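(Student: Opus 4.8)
The statement to prove is Lemma~\ref{homotopia}: for $\lambda \geq \lambda^*$, the composition $\beta \circ \Psi_{\lambda,r}\colon \Omega_\lambda^- \to \Omega_\lambda^+$ is well defined and equals the inclusion $\Omega_\lambda^- \hookrightarrow \Omega_\lambda^+$. Most of the work has already been set up: the plan is to simply assemble the pieces proved just above the statement.

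First I would check that the map is well defined with the claimed target. For each $\tilde x \in \Omega_\lambda^-$ we have $B_{\lambda r}(\tilde x) \subset \Omega_\lambda$ (this is where $\Omega_\lambda^-$ is used: points of $\Omega_\lambda^-$ are at distance $\geq r$ from $\partial\Omega_\lambda$, hence after scaling the ball of radius $\lambda r$ centered at $\tilde x$ sits inside $\Omega_\lambda$), so the trivial extension defining $\Psi_{\lambda,r}(\tilde x)$ lives in $H^1_{0,L}(\mathcal C_{\Omega_\lambda}, y^{1-2\alpha})$. The discussion preceding the lemma already shows $\Psi_{\lambda,r}(\tilde x) \in \mathcal M_\lambda^{c(B_{\lambda r})}$, using the projection constant $t_\lambda$ and the inequality $I_\lambda(\Psi_{\lambda,r}(\tilde x)) = I_{B_{\lambda r}}(t_\lambda \mathfrak w_{B_{\lambda r}}(|\tilde x - \cdot|,\cdot)) \leq I_{B_{\lambda r}}(\mathfrak w_{B_{\lambda r}}(|\tilde x - \cdot|,\cdot)) = c(B_{\lambda r})$, the inequality holding because $t=1$ maximizes $t \mapsto I_{B_{\lambda r}}(t\,\mathfrak w_{B_{\lambda r}})$ along the Nehari ray. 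Then, since for $\lambda \geq \lambda^*$ Proposition~\ref{fundamental} applies to every element of $\mathcal M_\lambda^{c(B_{\lambda r})}$, we get $\beta(\Psi_{\lambda,r}(\tilde x)) \in \Omega_\lambda^+$, so $\beta \circ \Psi_{\lambda,r}$ indeed maps $\Omega_\lambda^-$ into $\Omega_\lambda^+$.

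Next I would compute $\beta(\Psi_{\lambda,r}(\tilde x))$ explicitly. The trace of $\Psi_{\lambda,r}(\tilde x)$ on $\Omega_\lambda$ is $t_\lambda \mathfrak w_{B_{\lambda r}}(|\tilde x - x|, 0)$ supported on $B_{\lambda r}(\tilde x)$, and it is nonnegative (it is $t_\lambda > 0$ times the ground state, which is positive by Proposition~\ref{4.4}), so its positive part equals itself and the trivial extension to $\mathbb R^N$ is radially symmetric about $\tilde x$. Hence in the barycenter formula
\[
\beta(\Psi_{\lambda,r}(\tilde x)) = \frac{\displaystyle\int_{\mathbb R^N} x\, |t_\lambda \mathfrak w_{B_{\lambda r}}(|\tilde x - x|,0)|^2\,dx}{\displaystyle\int_{\mathbb R^N} |t_\lambda \mathfrak w_{B_{\lambda r}}(|\tilde x - x|,0)|^2\,dx}
\]
the constant $t_\lambda^2$ cancels, and the change of variables $x \mapsto x + \tilde x$ turns the numerator into $\int_{\mathbb R^N}(x+\tilde x)\,|\mathfrak w_{B_{\lambda r}}(|x|,0)|^2\,dx$; the $\int x\,|\mathfrak w_{B_{\lambda r}}(|x|,0)|^2\,dx$ piece vanishes by oddness of $x \mapsto x$ against the radial weight, leaving $\tilde x \int |\mathfrak w_{B_{\lambda r}}(|x|,0)|^2\,dx$, so $\beta(\Psi_{\lambda,r}(\tilde x)) = \tilde x$. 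This identity is in fact already recorded in the paragraph above the lemma; the point is just that it says precisely that $\beta \circ \Psi_{\lambda,r} = \mathrm{id}_{\Omega_\lambda^-}$, which composed with the inclusion $\Omega_\lambda^- \subset \Omega_\lambda^+$ is the inclusion map. This completes the proof.

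There is no real obstacle here — the lemma is essentially a bookkeeping consequence of Propositions~\ref{4.4} and~\ref{fundamental} and the construction of $\Psi_{\lambda,r}$. The only thing to be careful about is making the radial-symmetry/oddness cancellation in the barycenter computation clean, and confirming that the well-definedness of $t_\lambda$ (the Nehari projection of a nonzero compactly supported function is a genuine positive scalar) together with the containment $B_{\lambda r}(\tilde x) \subset \Omega_\lambda$ for $\tilde x \in \Omega_\lambda^-$ are invoked; both are already available from Lemma~\ref{Nehariman} and the definition of $\Omega_\lambda^-$.
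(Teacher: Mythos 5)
Your proposal is correct and follows essentially the same route as the paper, which treats the lemma as an immediate consequence of the preceding construction: the containment $B_{\lambda r}(\tilde x)\subset\Omega_{\lambda}$ for $\tilde x\in\Omega_{\lambda}^{-}$, the energy estimate $I_{\lambda}(\Psi_{\lambda,r}(\tilde x))\leq c(B_{\lambda r})$ giving membership in $\mathcal M_{\lambda}^{c(B_{\lambda r})}$, Proposition \ref{fundamental} for the target $\Omega_{\lambda}^{+}$, and the radial symmetry of $\mathfrak w_{B_{\lambda r}}(\cdot,0)$ (Proposition \ref{4.4}) forcing $\beta(\Psi_{\lambda,r}(\tilde x))=\tilde x$. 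Your explicit oddness computation for the barycenter just spells out what the paper records in the line $\beta(\Psi_{\lambda,r}(\tilde x))=\tilde x$ above the lemma.
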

The next result is a consequence of the above setting, but for the sake of completeness
we give the proof. It is understood, from now on, that for $\lambda^{*}$ we mean that given in Proposition \ref{fundamental}.
\begin{proposition}\label{4.5}
For  every $\lambda \geq {\lambda}^{*}$ we have
$$
cat \, \mathcal M_{\lambda}^{c(B_{\lambda r})}\geq
cat \,\Omega_{\lambda}.
$$
\end{proposition}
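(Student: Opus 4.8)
The plan is to deduce the inequality from Lemma~\ref{homotopia}, the argument being purely topological once that lemma is in hand. First I would record two standard facts. Since the choice of $r$ makes the inclusions $\Omega_\lambda^-\hookrightarrow\overline{\Omega_\lambda}\hookrightarrow\Omega_\lambda^+$ homotopy equivalences, one has
$$
\cat_{\Omega_\lambda^+}(\Omega_\lambda^-)=\cat(\Omega_\lambda^+)=\cat(\Omega_\lambda),
$$
the first equality being the elementary fact that $\cat_X(A)=\cat(X)$ whenever $A\hookrightarrow X$ is a homotopy equivalence, and the second being the homotopy invariance of the category. Secondly, I would use the standard pull--back behaviour of the category under continuous maps, in the spirit of \cite[Lemma~2.2]{BC3}. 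Everything then reduces to a short covering argument, carried out for $\lambda\ge\lambda^{*}$ so that the composition $\beta\circ\Psi_{\lambda,r}$ of Lemma~\ref{homotopia} is defined.

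For the covering argument, set $\mathcal N:=\mathcal M_\lambda^{c(B_{\lambda r})}$ and $n:=\cat(\mathcal N)$, and fix a cover $\mathcal N=A_1\cup\dots\cup A_n$ with each $A_j$ closed in $\mathcal N$ and contractible in $\mathcal N$, say through a homotopy $H_j:A_j\times[0,1]\to\mathcal N$ with $H_j(\cdot,0)=\mathrm{id}_{A_j}$ and $H_j(\cdot,1)\equiv p_j$. Since $\Psi_{\lambda,r}:\Omega_\lambda^-\to\mathcal N$ is continuous, the sets $B_j:=\Psi_{\lambda,r}^{-1}(A_j)$ are closed in $\Omega_\lambda^-$, hence closed in $\Omega_\lambda^+$ (as $\Omega_\lambda^-$ is closed there), and they cover $\Omega_\lambda^-$. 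I would then check that each $B_j$ is contractible in $\Omega_\lambda^+$: using the continuity of $\beta$ on $\mathcal N$, the map
$$
G_j:=\beta\circ H_j\circ(\Psi_{\lambda,r}|_{B_j}\times\mathrm{id}): B_j\times[0,1]\longrightarrow\Omega_\lambda^+
$$
is a homotopy from $G_j(\cdot,0)=\beta\circ\Psi_{\lambda,r}|_{B_j}$ to the constant $G_j(\cdot,1)\equiv\beta(p_j)$; and by Lemma~\ref{homotopia} the map $\beta\circ\Psi_{\lambda,r}|_{B_j}$ is precisely the inclusion of $B_j$ into $\Omega_\lambda^+$. Hence $\cat_{\Omega_\lambda^+}(\Omega_\lambda^-)\le n$, and combining this with the displayed identities yields $\cat(\Omega_\lambda)\le n=\cat(\mathcal M_\lambda^{c(B_{\lambda r})})$, which is the assertion.

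The step that requires genuine care — and essentially the only point where something specific to this problem enters — is the continuity of the two maps. For $\Psi_{\lambda,r}$ this is clear from its explicit formula: it is a translate of the fixed radial ground state $\mathfrak w_{B_{\lambda r}}$ rescaled by the Nehari projection factor $t_\lambda$, which depends continuously on $\tilde x$ through \eqref{mappanehari}. For $\beta$, the map $v\mapsto tr_{\Omega_\lambda}v^{+}$ is continuous from $H^1_{0,L}(\mathcal C_{\Omega_\lambda},y^{1-2\alpha})$ into $L^2(\Omega_\lambda)$, both integrals in the definition of $\beta$ are continuous on $L^2(\Omega_\lambda)$ because $\Omega_\lambda$ is bounded, and the denominator $\int_{\Omega_\lambda}|v^{+}(x,0)|^2\,dx$ is strictly positive at every point of $\mathcal M_\lambda$: indeed, if $tr_{\Omega_\lambda}v^{+}\equiv 0$, then \eqref{H_{0}} gives $J_\lambda(v)=\|v\|_\alpha^2$, so $J_\lambda(v)=0$ would force $v=0$, contradicting $v\in\mathcal M_\lambda$. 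With these continuity statements in hand, the covering argument above applies and the proof is complete.
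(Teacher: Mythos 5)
Your argument is correct and is essentially the same as the paper's: pull back a categorical cover of $\mathcal M_{\lambda}^{c(B_{\lambda r})}$ through $\Psi_{\lambda,r}$, contract each preimage inside $\Omega_{\lambda}^{+}$ via $\beta$ composed with the contracting homotopies, and conclude using that $\Omega_{\lambda}^{\pm}$ are homotopically equivalent to $\overline{\Omega}_{\lambda}$. The only difference is that you spell out the continuity of $\beta$ and $\Psi_{\lambda,r}$, which the paper leaves implicit.
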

\begin{proof}
Assume that $cat \,\mathcal M_{\lambda}^{c(B_{\lambda r})}=n$. This means that
$n$ is the smallest positive integer such that
$$
\mathcal M_{\lambda}^{c(B_{ \lambda r})}=\bigcup _{j=1}^{n}A_{j},
$$
where $A_{j}, j=1,\ldots,n$ are closed and contractible in $\mathcal M_{\lambda}^{c(B_{\lambda r})}$;
that is, there exist $h_{j}\in C([0,1]\times A_{j},\mathcal M_{\lambda}^{c(B_{\lambda r})})$
and fixed elements $w_j \in \mathcal M_{\lambda}^{c(B_{\lambda r})}$ 
 such that
$$
h_{j}(0,u)=u \,\, \ \textrm{for all }u \in A_j \qquad \mbox{and} \qquad
h_{j}(1,u)=w_j \ \mbox{for all} \ u\in A_{j}.
$$
 Consider the closed sets $D_{j}=\Psi_{\lambda, r}^{-1}(A_{j})$ and note that 
$$
\Omega_{\lambda}^{-}= \bigcup_{j=1}^{n}D_{j}.
$$
Using the deformation $g_{j}:[0,1]\times D_{j}\rightarrow
 \Omega_{\lambda}^{+}$ given by
$$
g_{j}(t,x)=\beta\Big((h_{j}(t, \Psi_{\lambda, r}(x))^{+}(\cdot,0)\Big),
$$
we have for $j=1,\ldots, n$ and $x\in D_{j}$
$$
g_{j}(0,x)=\beta\Big((h_{j}(0, \Psi_{r}(x)))^{+}(\cdot,0)\Big)= \beta\Big(\Psi_{\lambda,r}(x)(\cdot,0)\Big)=x
$$
and
$$
g_{j}(1,x)=\beta\Big((h_{j}(1, \Psi_{r}(z)))^{+}(\cdot,0)\Big)= \beta\Big(w_j(\cdot,0)^{+}\Big)\in \Omega_{\lambda}^{+}.
$$
 This means that  $D_{j}, j=1,\ldots,n$ is contractible 
in $\Omega_{\lambda}^{+}$, hence
$cat_{\Omega_{\lambda}^{+}}(\Omega_{\lambda}^{-})\leq n$. 
The conclusion follows
since $\Omega_{\lambda}^{+}$ and $\Omega^{-}_{\lambda}$ are homotopically equivalent to $\overline \Omega_{\lambda}$.
\end{proof}

\section{Proof of Theorem \ref{Main2} and Theorem \ref{+1}}\label{1}
Let us fix $\lambda\geq\lambda^{*}$. 
Since $I_{\lambda}$ satisfies the Palais-Smale condition on
 $\mathcal{M}_{\lambda}$, applying the Ljusternik-Schnirelmann theory and Proposition \ref{4.5}, we get $I_{\lambda}$ on
  $\mathcal{M}_{\lambda}$ has at least $cat \,\Omega_{\lambda}$ critical points whose energy
   is less than $c(B_{\lambda r})$. Moreover, all solutions obtained are positive
    by the maximum principle proved in Lemma \ref{maximumprinciple}, finishing the proof of Theorem \ref{Main2}.
    
To get another solution, and then proving Theorem \ref{+1}, we use the same ideas of \cite{BCP}.
Since $\Omega_{\lambda}$ is not contractible, 
the compact set  $A:=\overline{{\Phi_{\lambda,r}(\Omega_{\lambda}^{-})}}$
can not be contractible in $\mathcal M_{\lambda}^{c(B_{\lambda r})}$.
Moreover, as in \cite{BC3}, one can show  that functions on the Nehari manifold have to be positive
on a set of nonzero measure.

In the following, for $u\in H^{1}_{0,L}(\Omega_{\lambda}, y^{1-2\alpha})\setminus\{0\}$
we denote with $t_\lambda (u)>0$ the unique positive number such that $t_{\lambda}(u) u\in \mathcal M_{\lambda}.$

\medskip

Take $u^{*}\in H^{1}_{0,L}(\Omega_{\lambda}, y^{1-2\alpha}) $  such that $u^{*}\geq 0$, and 
$I_{\lambda}(t_{\lambda}(u^{*})u^{*})>c(B_{\lambda r}).$
Consider the cone
$$\mathcal K:=\Big\{tu^{*}+(1-t)u: t\in [0,1], u\in A \Big\}$$
(which is  compact and contractible) and, since  functions in $\mathcal K$ 
have to be positive on a set of nonzero measure,  $0\notin \mathcal K$.
Then it makes sense to project the cone on the Nehari manifold
$$t_{\lambda}(\mathcal K):=\Big\{t_{\lambda}(w)w: w\in \mathcal K\Big\}\subset \mathcal M_{\lambda}$$
and consider the number
$$c:=\max_{t_{\lambda}(\mathcal K)}I_{\lambda}>c(B_{\lambda r}).$$
Since $ A\subset t_{\lambda}(\mathcal K)\subset \mathcal M_{\lambda}$
and $t_{\lambda}(\mathcal K)$ is contractible in $\mathcal M_{\lambda}^{c}$,
we infer that also $ A$ is contractible in $\mathcal M^{c}_{\lambda}$.
In conclusion, $ A$  is contractible in $\mathcal M^{c}_{\lambda}$,
 not contractible  in $\mathcal M^{c(B_{\lambda r})}_{\lambda}$, and  $c>c(B_{\lambda r});$
this is only possible, since $I_{\lambda}$ satisfies the Palais-Smale condition, if there is
a critical level  between $c(B_{\lambda r})$ and $c$, that is, another solution to our problem.

\section{Proof of Theorem \ref{Morse}}\label{2}

Before prove the theorem we  recall some basic facts of Morse theory
and fix some notations.
For a pair of topological spaces $(X,Y)$,
$Y\subset X,$ let $H_{*}(X,Y)$ be its singular homology with coefficients in some field $\mathbb F$
(from now on omitted) and 
$$\mathcal P_{t}(X,Y)=\sum_{k}\dim H_{k}(X,Y)t^{k}$$
the Poincar\'e polynomial of the pair. If $Y=\emptyset$, it will be always omitted in the objects which involve the pair.
Recall that  if $H$ is an Hilbert space,  $I:H\to \mathbb R$  a $C^{2}$ functional  and 
$u$ an isolated critical point  with $I(u)=c$, the  {\sl polynomial Morse index} of $u$ is
$$\mathcal I_{t}(u)=\sum_{k}\dim C_{k}(I,u)  t^{k}$$
where $C_{k}(I,u)=H_{k}(I^{c}\cap U, (I^{c}\setminus\{u\})\cap U)$ are the critical groups.
Here $I^{c}=\{u\in H: I(u)\leq c\}$ and $U$ is a neighborhood of the critical point $u$.
The multiplicity of $u$ is the number $\mathcal I_{1}(u)$.

It is known that  for a non-degenerate critical point $u$
(that is, the selfadjoint operator associated to $I''(u)$
is an isomorphism)
it is $\mathcal I_{t}(u)=t^{\mathfrak m (u)}$,
where $\mathfrak m(u)$ is the {\sl (numerical) Morse index of $u$}: the maximal dimension
of the subspaces where $I''(u)[\cdot,\cdot]$ is negative definite.

\medskip
Coming back to our functional,  we know that  $I_{\lambda}$ satisfies the Palais-Smale condition (see Lemma \ref{lema_PS_unconstrained}).
Moreover $I_{\lambda}$ is of class $C^{2}$ and
for $v,v_{1},v_{2}\in H^{1}_{0,L}(\mathcal C_{\Omega_{\lambda}}, y^{1-2\alpha})$ it is
\begin{multline*}
I_{\lambda}''(v)[v_{1},v_{2}]=k_\alpha^{-1} 
\int_{\mathcal{C}_{\Omega_{\lambda}}}y^{1-2\alpha}\nabla v_{1} \nabla v_{2}\, dx dy + \\ 
\int_{\Omega_{\lambda}} v(x,0) w(x,0) dx - \int_{\Omega_{\lambda} } h'(v(x,0))v_{1}(x,0) v_{2}(x,0)dx.
\end{multline*}
So  $I_{\lambda}''(v)$ is represented by the operator
\begin{equation}\label{Lv}
\mathrm L_{\lambda}(v):=\mathrm R_{\lambda}(v)-\mathrm K_{\lambda}(v):
H^{1}_{0,L}(\mathcal C_{\Omega_{\lambda}}, y^{1-2\alpha})\to \Big(H^{1}_{0,L}(\mathcal C_{\Omega_{\lambda}}, y^{1-2\alpha}) \Big)' 
\end{equation}
where $\mathrm R_{\lambda}(v)$ is the Riesz isomorphism and $\mathrm K_{\lambda}(v)$ is compact. Indeed let
 $v_{n}\rightharpoonup 0$ in $H^{1}_{0,L}(\mathcal C_{\Omega_{\lambda}}, y^{1-2\alpha})$ and 
 $w\in H^{1}_{0,L}(\mathcal C_{\Omega_{\lambda}}, y^{1-2\alpha})$;
in virtue of \eqref{H_{1}'} and \eqref{H_{2}'}, for a given  $\xi > 0$ there exists some constant $C_{\xi}>0$ such that 
$$\int_{\Omega_{\lambda}}\Big|h'(v(x,0))v_{n}(x,0)w(x,0)\Big|dx\leq \xi \int_{\Omega_{\lambda}} |v_{n}(x,0)w(x,0) |dx 
+C_{\xi}\int_{\Omega_{\lambda}}|v(x,0)|^{q-1}|v_{n}(x,0)w(x,0)|dx.$$
Using that  $v_{n}\rightharpoonup 0$ and the arbitrariness of  $\xi$, we get
$$\|\mathrm K_{\lambda}(v)[v_{n}]\|=\sup_{\|w\|_\alpha=1}\Big|\int_{\Omega_{\lambda}}h'(v(x,0))v_{n}(x,0)w(x,0)dx\Big|\rightarrow 0.$$
In particular $\mathrm L_{\lambda}(v)$ is a Fredholm operator with index zero.
Moreover, for $a\in(-\infty,+\infty]$, we set
\begin{itemize}
\item $\textrm{Crit}_{\lambda}:=\Big\{u\in H^{1}_{0,L}(\mathcal C_{\Omega_{\lambda}}, y^{1-2\alpha}): I'_{\lambda}(u)=0\Big\}$, 
 the set of critical points of $I_{\lambda}$; \medskip
\item$(\textrm{Crit}_{\lambda})^{a}:= \textrm{Crit}_{\lambda}\cap  I_{\lambda}^{a}$; \medskip 
\item $ (\textrm{Crit}_{\lambda})_{a}:=\Big\{u\in \textrm{Crit}_{\lambda}: I_{\lambda}(u)> a\Big\} $.
\end{itemize}

In the remaining part of this section we will follow \cite{CSR,BC3}. We will
not give the the proofs of the next Lemma \ref{Multiplic} and Corollary \ref{Quadrato}
since they follows by general arguments.

Let $\lambda^{*}>0$ as given in Proposition \ref{fundamental}
and  $\lambda\geq \lambda^{*}$ be {\sl fixed} from now on.
In view of  Corollary \ref{17}, to prove Theorem \ref{Morse} it is sufficient to show that
 $I_{\lambda}$ restricted to $\mathcal M_{\lambda}$
 has at least $2\mathcal P_{1}(\Omega_{\lambda})-1$ critical points.
 
 \medskip
 
First note that we can assume that $c(B_{\lambda r})$ is a regular value for $I_{\lambda}$. Otherwise
we can choose a $\rho\in (0,r)$ so that the new sets
$$
\Omega_{\lambda}^{+}=\{x\in \mathbb{R}^N; d(x,{\Omega_{\lambda}})\leq \rho\}
\ \ \text{ and }\ \ \Omega_{\lambda}^{-}=\{x\in \Omega; d(x,\partial\Omega_{\lambda})\geq \rho\}
$$
are still homotopically equivalent to $\Omega$, 
$c(B_{\lambda \rho})>c(B_{\lambda r})$ and 
$c(B_{\lambda \rho})$ is a regular value;
and we rename $c(B_{\lambda \rho})$ as $c(B_{\lambda r})$.
Of course, we can  also assume   
 that $\textrm{Crit}_{\lambda}$ is discrete. Since $I_{\lambda}$ is bounded from below on $\mathcal M_{\lambda}$,
 let us say by a $\delta_{\lambda}>0$,
 we have
 $$(\textrm{Crit}_{\lambda})^{c(B_{\lambda r})}=
 \Big\{v\in \textrm{Crit}_{\lambda}: 0<\delta_{\lambda}<I_{\lambda}(v)\leq c(B_{\lambda r})\Big\}$$
and $(\textrm{Crit}_{\lambda})^{c(B_{\lambda r})}$ and $(\textrm{Crit}_{\lambda})_{c(B_{\lambda r})}$
are (critical) isolated sets covering $\textrm{Crit}_{\lambda}$.

By Lemma \ref{homotopia} and the fact that $(\Psi_{\lambda, r})_{*}$ induces monomorphism between the homology groups
$H_{*}(\Omega_{\lambda}^{-})$ and $H_{*}(\mathcal M_{\lambda}^{c(B_{\lambda r})})$, it is standard to see that 
\begin{equation}\label{injecoes}
 \mathcal P_{t}(\mathcal M_{\lambda}^{c(B_{\lambda r})})= \mathcal P_{t}(\Omega_{\lambda}^{-}) +\mathcal Q_{t}, 
 \quad \mathcal Q\in \mathbb P 
\end{equation}
where we are denoting with $\mathbb P$ the set of polynomial with nonnegative integer coefficients.
Recall that $c(\Omega_{\lambda})=\min_{\mathcal M_{\lambda}}I_{\lambda}.$ As in \cite[Lemma 5.2]{BC3}
(the proof just uses a topological lemma and a general deformation argument) one proves the following 
\begin{lemma}\label{Multiplic}
Let $d\in (0,c(\Omega_{\lambda}))$ and $\l\in (d,+\infty]$ a regular level for $I_{\lambda}.$
Then \begin{equation*}\label{multiplic}
\mathcal P_{t}(I_{\lambda}^{\l}, I_{\lambda}^{d})=t \mathcal P_{t}(\mathcal M_{\lambda}^{\l}).
\end{equation*}
\end{lemma}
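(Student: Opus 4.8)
The plan is to deform the whole space $H^{1}_{0,L}(\mathcal C_{\Omega_{\lambda}}, y^{1-2\alpha})$ (or rather the sublevel $I_{\lambda}^{\l}$) onto a neighborhood of the Nehari manifold, so that the pair $(I_{\lambda}^{\l},I_{\lambda}^{d})$ becomes, up to homotopy equivalence, a ``mapping cylinder'' of the radial projection onto $\mathcal M_{\lambda}$. Concretely, recall from Lemma~\ref{Nehariman} and \eqref{mappanehari} that every nonzero $v$ projects to a unique $t_{\lambda}(v)v\in\mathcal M_{\lambda}$ via the homeomorphism $v\mapsto t_{\lambda}(v)v$, and that $t\mapsto I_{\lambda}(tv)$ is strictly increasing on $(0,t_{\lambda}(v))$ and strictly decreasing afterwards, with maximum value $I_{\lambda}(t_{\lambda}(v)v)\ge c(\Omega_{\lambda})>d$. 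Hence for every $v\neq 0$ with $I_{\lambda}(v)\le d<c(\Omega_{\lambda})$ one has $0<t_{\lambda}(v)>1$ is impossible --- more precisely $I_{\lambda}(v)\le d$ forces $v$ to lie on the ``ascending'' branch, i.e. $t_{\lambda}(v)\ge 1$ cannot fail... I would first record the precise dichotomy: $I_{\lambda}(v)\le d$ implies either $tv\notin$ any relevant region, so the radial flow pushes such $v$ monotonically. The upshot is a well-defined continuous map $R:I_{\lambda}^{\l}\setminus\{0\}\to\mathcal M_{\lambda}^{\l}$, $R(v)=t_{\lambda}(v)v$, together with the radial deformation $\eta(s,v)=((1-s)+s\,t_{\lambda}(v))v$ which stays in $I_{\lambda}^{\l}$ because $\l$ is a regular level above $c(\Omega_{\lambda})$ and the segment from $v$ to $t_\lambda(v)v$ only increases the functional up to the mountain-pass value, then one truncates.

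The key steps, in order, are: \emph{(i)} show $I_{\lambda}^{\l}$ is star-shaped-like with respect to the radial structure so that $I_{\lambda}^{\l}\setminus\{0\}$ deformation-retracts onto $\mathcal M_{\lambda}^{\l}$, and $I_{\lambda}^{d}$ deformation-retracts onto a set of the form $\{tv: 0<t\le t_{\lambda}(v)v\in\mathcal M_{\lambda}^{\l},\ I_{\lambda}(tv)\le d\}$, which in turn is homotopy-equivalent to $\mathcal M_{\lambda}^{\l}\times\{0\}$ sitting at the ``bottom'' of the cone; \emph{(ii)} identify the pair $(I_{\lambda}^{\l},I_{\lambda}^{d})$ up to homotopy with $(C\mathcal M_{\lambda}^{\l},\ \mathcal M_{\lambda}^{\l})$ where $C X$ denotes the cone, i.e. with the pair (cone on $\mathcal M_{\lambda}^{\l}$, base of the cone); \emph{(iii)} apply the long exact sequence of the pair, or directly the well-known suspension/cone isomorphism $H_{k}(CX,X)\cong \widetilde H_{k-1}(X)$, to get $H_{k}(I_{\lambda}^{\l},I_{\lambda}^{d})\cong \widetilde H_{k-1}(\mathcal M_{\lambda}^{\l})$, hence $\mathcal P_{t}(I_{\lambda}^{\l},I_{\lambda}^{d})=t\,\mathcal P_{t}(\mathcal M_{\lambda}^{\l})$ (the reduced homology matches because $\mathcal M_{\lambda}^{\l}$ is nonempty, so $\widetilde H_{0}$ drops one dimension and the $t^{0}$ term disappears, exactly as the factor $t$ records). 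The deformations are built from the negative gradient flow of $I_{\lambda}$ combined with the radial dilation $v\mapsto tv$; the Palais--Smale condition and the absence of critical values in $(d,c(\Omega_{\lambda}))$ and at $\l$ guarantee the flow is globally defined and the retractions are continuous.

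The main obstacle, and the place where care is genuinely needed, is step~\emph{(i)}: making the radial retraction $I_{\lambda}^{\l}\setminus\{0\}\to \mathcal M_{\lambda}^{\l}$ continuous and compatible with the filtration, because a priori a point $v\in I_{\lambda}^{\l}$ with large norm could have $I_{\lambda}(v)\le \l$ while $I_{\lambda}(t_{\lambda}(v)v)$ overshoots $\l$ --- so one cannot simply dilate. The fix is the standard one from \cite[Lemma~5.2]{BC3}: one does not retract all of $I_{\lambda}^{\l}$ at once but uses a pseudo-gradient deformation to first push $I_{\lambda}^{\l}$ into a sublevel just below $\l$ avoiding the (finitely many, by discreteness of $\mathrm{Crit}_{\lambda}$) critical points, exploiting that $\l$ is regular, and only then performs the radial dilation on the part where it is safe, gluing with the cone structure coming from $t\in(0,t_{\lambda}(v)]$. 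I would therefore invoke the deformation lemma twice --- once to clear the level $\l$, once to realize the cone --- and cite the topological lemma of \cite{BC3} for the combinatorial bookkeeping of the homology, rather than reproving it; the statement then follows verbatim as in \cite[Lemma~5.2]{BC3} with $I_{\infty}$ there replaced by $I_{\lambda}$ here and the Hilbert space replaced by $H^{1}_{0,L}(\mathcal C_{\Omega_{\lambda}}, y^{1-2\alpha})$, all of whose relevant properties (Mountain Pass geometry, Palais--Smale, Nehari diffeomorphism to the sphere) have been established above.
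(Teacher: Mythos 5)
The paper itself does not prove this lemma: it explicitly defers to \cite[Lemma 5.2]{BC3}, noting only that the proof combines a topological lemma with a general deformation argument. Your decision to invoke that reference is therefore consistent with the paper. However, the sketch you give of the underlying topology contains a genuine error that would make your version of the argument produce the wrong polynomial. You identify the pair $(I_{\lambda}^{\l}, I_{\lambda}^{d})$ with the cone pair $(C\mathcal M_{\lambda}^{\l}, \mathcal M_{\lambda}^{\l})$ and invoke $H_{k}(CX,X)\cong \widetilde H_{k-1}(X)$. That isomorphism is correct for a cone, but it yields $\mathcal P_{t}(CX,X)=t\bigl(\mathcal P_{t}(X)-1\bigr)$, not $t\,\mathcal P_{t}(X)$: reduced homology loses one generator in degree $0$, so your model is off by a summand $t$. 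Your remark that ``the $t^{0}$ term disappears, exactly as the factor $t$ records'' conflates the degree shift (which is the factor $t$) with the loss of a generator in $\widetilde H_{0}$ (which subtracts $1$ before multiplying by $t$); these are two different effects and both would occur in the cone model.

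The missing idea is the two-ended structure of $I_{\lambda}^{d}$ along each ray. Since $d<c(\Omega_{\lambda})=\inf_{\mathcal M_{\lambda}}I_{\lambda}$ and, by \eqref{H_{3}}--\eqref{H_{4}}, the map $t\mapsto I_{\lambda}(tv)$ increases up to $t_{\lambda}(v)$ (where its value is at least $c(\Omega_{\lambda})>d$) and then decreases to $-\infty$, the set $\{t>0: I_{\lambda}(tv)\le d\}$ consists of \emph{two} intervals, one adjacent to $0$ and one unbounded. Consequently the correct homotopy model for the pair is not (cone, base) but (cylinder, both ends), i.e.\ $\bigl(\mathcal M_{\lambda}^{\l}\times[0,1],\ \mathcal M_{\lambda}^{\l}\times\{0,1\}\bigr)$, whose relative homology is the \emph{unreduced} $H_{k-1}(\mathcal M_{\lambda}^{\l})$; this is exactly what produces $t\,\mathcal P_{t}(\mathcal M_{\lambda}^{\l})$ with no correction term. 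Your garbled ``dichotomy'' paragraph (whether $t_{\lambda}(v)>1$ or $t_{\lambda}(v)<1$ for $v$ with $I_{\lambda}(v)\le d$) is precisely where this structural fact should have been recorded: both alternatives occur, and that is the point. The remainder of your outline --- pseudo-gradient deformations justified by the Palais--Smale condition and the regularity of the level, plus the citation of \cite{BC3} for the bookkeeping --- is the right strategy and matches the reference the paper relies on.
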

From this lemma, \eqref{injecoes} and the fact that $\pi_{1}(\mathcal M_{\lambda})\approx\{0\}$, it follows that
\begin{equation}\label{uno}
\mathcal P_{t}(I_{\lambda}^{c(B_{\lambda r})}, I_{\lambda}^{d})= t \Big(\mathcal P_{t}(\Omega_{\lambda}^{-})+\mathcal Q_{t}\Big)
\end{equation}
and
\begin{equation}\label{due}
 \mathcal P_{t}(H^{1}_{0,L}(y^{1-2\alpha}), I_{\lambda}^{d})= t \mathcal P_{t}( \mathcal M_{\lambda})=t.
\end{equation}
Finally we need the next result, whose proof is a  matter of algebraic topology (see \cite[Lemma 2.4]{CSR} or  \cite[Lemma 5.6]{BC3})
\begin{corollary}\label{Quadrato}
We have
\begin{equation}\label{quadrato}
 \mathcal P_{t}(H^{1}_{0,L}(y^{1-2\alpha}), I_{\lambda}^{c(B_{\lambda r})})
 =t^{2}\Big(\mathcal P_{t}(\Omega_{\lambda}) +\mathcal Q_{t}-1\Big), \quad \mathcal Q\in \mathbb P.
\end{equation}
\end{corollary}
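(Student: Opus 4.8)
The plan is to deduce \eqref{quadrato} from the two identities \eqref{uno} and \eqref{due} already at our disposal, by passing through the absolute homology of the sublevel set $I_{\lambda}^{c(B_{\lambda r})}$. The starting remark is that $H^{1}_{0,L}(y^{1-2\alpha})$ is a Hilbert space, hence contractible; so for every nonempty $A\subset H^{1}_{0,L}(y^{1-2\alpha})$ the long exact sequence of the pair collapses to isomorphisms $H_{n}(H^{1}_{0,L}(y^{1-2\alpha}),A)\cong\widetilde H_{n-1}(A)$ for all $n$, that is,
$$\mathcal P_{t}\bigl(H^{1}_{0,L}(y^{1-2\alpha}),A\bigr)=t\bigl(\mathcal P_{t}(A)-1\bigr).$$
Applying this with $A=I_{\lambda}^{d}$ and comparing with \eqref{due} gives $\mathcal P_{t}(I_{\lambda}^{d})=2$, so $I_{\lambda}^{d}$ has exactly two connected components, each with trivial reduced homology; applying it with $A=I_{\lambda}^{c(B_{\lambda r})}$ reduces \eqref{quadrato} to the equivalent identity
$$\mathcal P_{t}\bigl(I_{\lambda}^{c(B_{\lambda r})}\bigr)-1=t\bigl(\mathcal P_{t}(\Omega_{\lambda})+\mathcal Q_{t}-1\bigr),$$
with $\mathcal Q$ the polynomial already produced in \eqref{injecoes}--\eqref{uno}.

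The next step is to check that $I_{\lambda}^{c(B_{\lambda r})}$ is connected. Since $B_{\lambda r}\subset\Omega_{\lambda}$ we have $c(B_{\lambda r})\geq c(\Omega_{\lambda})=\min_{\mathcal M_{\lambda}}I_{\lambda}$, and the latter is a critical value while $c(B_{\lambda r})$ has been arranged to be a regular value, so in fact $c(B_{\lambda r})>c(\Omega_{\lambda})$. Of the two components of $I_{\lambda}^{d}$, one contains the strict local minimum $0$ and the other a point $e$ with $I_{\lambda}(e)<0$ (such $e$ exists by \eqref{H_{3}}); since the mountain pass level of $I_{\lambda}$ is precisely $c(\Omega_{\lambda})$, for $\varepsilon>0$ with $c(\Omega_{\lambda})+\varepsilon<c(B_{\lambda r})$ there is a path from $0$ to $e$ contained in $I_{\lambda}^{c(\Omega_{\lambda})+\varepsilon}\subset I_{\lambda}^{c(B_{\lambda r})}$. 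Hence $\dim H_{0}\bigl(I_{\lambda}^{c(B_{\lambda r})}\bigr)=1$.

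Finally I would run the homology long exact sequence of the pair $(I_{\lambda}^{c(B_{\lambda r})},I_{\lambda}^{d})$. Because $H_{n}(I_{\lambda}^{d})=0$ for $n\geq1$, it yields $H_{n}(I_{\lambda}^{c(B_{\lambda r})})\cong H_{n}(I_{\lambda}^{c(B_{\lambda r})},I_{\lambda}^{d})$ for $n\geq2$, while the bottom portion
$$0\to H_{1}\bigl(I_{\lambda}^{c(B_{\lambda r})}\bigr)\to H_{1}\bigl(I_{\lambda}^{c(B_{\lambda r})},I_{\lambda}^{d}\bigr)\to H_{0}\bigl(I_{\lambda}^{d}\bigr)\to H_{0}\bigl(I_{\lambda}^{c(B_{\lambda r})}\bigr)\to H_{0}\bigl(I_{\lambda}^{c(B_{\lambda r})},I_{\lambda}^{d}\bigr)\to 0,$$
in which $\dim H_{0}(I_{\lambda}^{d})=2$, $\dim H_{0}(I_{\lambda}^{c(B_{\lambda r})})=1$ and $H_{0}(I_{\lambda}^{c(B_{\lambda r})},I_{\lambda}^{d})=0$ (the right-hand side of \eqref{uno} has no constant term), forces $\dim H_{1}(I_{\lambda}^{c(B_{\lambda r})})=\dim H_{1}(I_{\lambda}^{c(B_{\lambda r})},I_{\lambda}^{d})-1$ by a dimension count. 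Summing over all degrees and inserting $\mathcal P_{t}(I_{\lambda}^{c(B_{\lambda r})},I_{\lambda}^{d})=t(\mathcal P_{t}(\Omega_{\lambda}^{-})+\mathcal Q_{t})$ from \eqref{uno}, together with $\mathcal P_{t}(\Omega_{\lambda}^{-})=\mathcal P_{t}(\Omega_{\lambda})$, gives $\mathcal P_{t}(I_{\lambda}^{c(B_{\lambda r})})=1-t+t(\mathcal P_{t}(\Omega_{\lambda})+\mathcal Q_{t})$, i.e. the identity displayed in the first paragraph; multiplying by $t$ then yields \eqref{quadrato} with $\mathcal Q\in\mathbb P$.

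I expect the only delicate points to be the connectedness step and the low-degree bookkeeping in the pair sequence: the ``$-1$'' in \eqref{quadrato} is exactly the effect of the two components of $I_{\lambda}^{d}$ merging into one in $I_{\lambda}^{c(B_{\lambda r})}$, whereas in every degree $\geq2$ the exact sequence just produces isomorphisms. This is the same mechanism used in \cite[Lemma 5.6]{BC3} and \cite[Lemma 2.4]{CSR}.
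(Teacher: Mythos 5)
Your argument is correct and is essentially the standard one that the paper omits and delegates to \cite[Lemma 5.6]{BC3} and \cite[Lemma 2.4]{CSR}: there the computation is organized via the long exact sequence of the triple $\bigl(H^{1}_{0,L}(y^{1-2\alpha}), I_{\lambda}^{c(B_{\lambda r})}, I_{\lambda}^{d}\bigr)$, which is equivalent to your reduction through the absolute homology of the sublevel sets using the contractibility of the ambient Hilbert space together with \eqref{uno} and \eqref{due}. You also correctly isolate the crux, namely the connectedness of $I_{\lambda}^{c(B_{\lambda r})}$ (equivalently $H_{1}\bigl(H^{1}_{0,L}(y^{1-2\alpha}), I_{\lambda}^{c(B_{\lambda r})}\bigr)=0$), and your mountain-pass path argument, resting on the strict inequality $c(B_{\lambda r})>c(\Omega_{\lambda})$ (strict because the latter is attained while the former has been arranged to be a regular value), settles it.
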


Then the Morse theory,  \eqref{uno}, \eqref{due} and \eqref{quadrato} give
\begin{eqnarray*}
\sum_{v\in (\textrm{Crtit}_{\lambda})^{c(B_{\lambda r})}}\mathcal I_{t}(v)&=&
 \mathcal P_{t}(I_{\lambda}^{c(B_{\lambda r})}, I_{\lambda}^{d}) +(1+t)\mathcal Q'_{t} \\
&=& t\Big(\mathcal P_{t}(\Omega_{\lambda})+\mathcal Q_{t}\Big)+(1+t)\mathcal Q'_{t} 
\end{eqnarray*}
and
\begin{eqnarray*}
\sum_{v\in (\textrm{Crtit}_{\lambda})_{c(B_{\lambda r})}}\mathcal I_{t}(v)&=&
 \mathcal P_{t}(H^{1}_{0,L}(y^{1-2\alpha}), I_{\lambda}^{c(B_{\lambda r})}) +(1+t)\mathcal Q''_{t} \\
&=& t^{2}\Big(\mathcal P_{t}(\Omega_{\lambda})+\mathcal Q_{t}-1\Big)+(1+t)\mathcal Q''_{t} 
\end{eqnarray*}
for some $\mathcal Q, \mathcal Q', \mathcal Q'' \in \mathbb P.$
As a consequence we obtain
\begin{equation}\label{finale}
 \sum_{v\in \textrm{Crtit}_{\lambda}}\mathcal I_{t}(v)=
 t\mathcal P_{t}(\Omega_{\lambda})
+t^{2}\Big(\mathcal P_{t}(\Omega_{\lambda})-1\Big)+t(1+t)\mathcal Q_{t}
\end{equation}
for a suitable $\mathcal Q\in \mathbb P$.

It is known that  for a non-degenerate critical point $v$
(that is, $\mathrm L_{\lambda}(v)$ given in \eqref{Lv}
is an isomorphism)
it is $\mathcal I_{t}(v)=t^{\mathfrak m (v)}$,
where $\mathfrak m(v)$ is the {\sl (numerical) Morse index of $v$}: the maximal dimension
of the subspaces where $I_{\lambda}''(v)[\cdot,\cdot]$ is negative definite.

Then, if the solutions are non-degenerate, \eqref{finale} easily gives the existence of at least $2\mathcal P_{1}(\Omega_{\lambda})-1$
solutions, completing the proof of Theorem \ref{Morse}.

\bigskip

\noindent \textbf{Acknowledgment.} The first author would like to thank UNESP - Presidente Prudente, specially to
 Professor Marcos Pimenta for his attention and friendship. This work was done while he was visiting that institution.

\end{document}